\newtheorem{thm}{Theorem}[section]
\newtheorem{cor}[thm]{Corollary}
\newtheorem{lem}[thm]{Lemma}
\newtheorem{prop}[thm]{Proposition}
\theoremstyle{definition}
\newtheorem{defn}[thm]{Definition}
\theoremstyle{remark}
\newtheorem{rem}[thm]{Remark}
\numberwithin{equation}{section}
\newcommand{\norm}[1]{\left\Vert#1\right\Vert}
\newcommand{\abs}[1]{\left\vert#1\right\vert}
\newcommand{\set}[1]{\left\{#1\right\}}
\newcommand{\Ind}[1]{\mathbf{1}_{\left\{#1\right\}}}
\newcommand{\RR}{\mathbb{R}}
\newcommand{\PP}{\mathbb{P}}
\newcommand{\CC}{\mathbb{C}}
\newcommand{\NN}{\mathbb{N}}
\newcommand{\FF}{\mathbb{F}}
\newcommand{\bbS}{\mathbb{S}}
\newcommand{\cU}{\mathcal{U}}
\newcommand{\cF}{\mathcal{F}}
\newcommand{\cQ}{\mathcal{Q}}
\newcommand{\Rplus}{\mathbb{R}_{\geqslant 0}}
\newcommand{\pd}[2]{\frac{\partial #1}{\partial #2}}
\newcommand{\scal}[2]{\left\langle{#1},{#2}\right\rangle}
\renewcommand{\Re}{\mathrm{Re}}
\newcommand{\cD}{\mathcal{D}}
\newcommand{\supp}{\operatorname{supp}}
\newcommand{\wt}[1]{{\widetilde{#1}}}
\newcommand{\wh}[1]{{\widehat{#1}}}
\newcommand{\E}[1]{\mathbb{E}\left[#1\right]}                           
\newcommand{\Ex}[2]{\mathbb{E}^{#1}\left[#2\right]}                     
\newcommand{\Excond}[3]{\mathbb{E}^{#1}\left[\left.#2\right|#3\right]}  
\title{Regularity of affine processes on general\linebreak state spaces}
\author{Martin Keller-Ressel}
\address{Department of Mathematics, TU Berlin, Germany}
\email{mkeller@math.tu-berlin.de}
\author{Walter Schachermayer}
\address{Faculty of Mathematics, University of Vienna, Austria}
\email{walter.schachermayer@univie.ac.at}
\author{Josef Teichmann}
\address{Department of Mathematics, ETH Zurich, Switzerland}
\email{jteichma@math.ethz.ch}
\thanks{The first and third author gratefully acknowledge the support by the ETH foundation. The second author gratefully acknowledges financial support from the Austrian Science Fund (FWF) under grant P19456, from the European Research Council (ERC) under grant FA506041 and from the Vienna Science and Technology Fund (WWTF) under grant MA09-003. Furthermore this work was financially supported by the Christian Doppler Research Association (CDG).\\
The authors would like to thank Enno Veerman and Maurizio Barbato for comments on an earlier draft}
\keywords{affine process, regularity, semimartingale, generalized Riccati equation}
\subjclass[2000]{60J25}
\date{\today}
\begin{document}
\maketitle

\begin{abstract}We consider a stochastically continuous, affine Markov process in the sense of \citet{Duffie2003}, with
c\`adl\`ag paths, on a general state space $D$, i.e.~an arbitrary Borel subset of $\RR^d$. We show that such a process is always
regular, meaning that its Fourier-Laplace transform is differentiable in time, with derivatives that are continuous in the
transform variable. As a consequence, we show that generalized Riccati equations and L\'evy-Khintchine parameters for the process
can be derived, as in the case of $D = \Rplus^m \times \RR^n$ studied in \citet{Duffie2003}. Moreover, we show that when the
killing rate is zero, the affine process is a semi-martingale with absolutely continuous characteristics up to its time of explosion.
Our results generalize the results of \citet{KST2011} for the state space $\Rplus^m \times \RR^n$ and provide a new probabilistic
approach to regularity.
\end{abstract}

\section{Introduction}
A time-homogeneous, stochastically continuous Markov process $X$
on the state space $D \subset \RR^d$ is called affine, if its
transition kernel $p_t(x,d\xi)$ has the following property:  There
exist functions $\Phi$ and $\psi$, taking values in $\CC$ and $\CC^d$ respectively, such that
\begin{equation*}
\int_D e^{\scal{\xi}{u}}p_t(x,d\xi) = \Phi(t,u)
\exp(\scal{x}{\psi(t,u)})
\end{equation*}
for all $t \in \Rplus$, $x \in D$ and $u$ in the set $\cU = \set{u \in \CC^d: \sup_{x \in D} \Re \scal{u}{x} < \infty}$.

The class of stochastic processes resulting from this definition is
a rich class that includes Brownian motion, L\'evy processes,
squared Bessel processes, continuous-state branching processes with
and without immigration \citep{Kawazu1971}, Ornstein-Uhlenbeck-type
processes \citep[Ch.~17]{Sato1999}, Wishart processes \citep{Bru1991} and several models from
mathematical finance, such as the affine term structure models of
interest rates \citep{Duffie1996} and the affine stochastic
volatility models \citep{Kallsen2006} for stock prices.

For a state space of the form $D = \Rplus^m \times \RR^n$ the class
of affine processes has been originally defined and systematically
studied by \citet{Duffie2003}, under a regularity condition. In this
context, regularity means that the time-derivatives
\begin{align*}
F(u) = \frac{\partial \Phi(t,u)}{\partial t}\Bigg |_{t=0+}, \qquad
R(u) = \frac{\partial \psi(t,u)}{\partial t}\Bigg |_{t=0+}
\end{align*}
exist for all $u \in \cU$ and are continuous on subsets $\cU_k$ of $\cU$ that exhaust $\cU$. Once
regularity is established, the process $X$ can be described
completely in terms of the functions $F$ and $R$. The problem of showing that regularity of a stochastically continuous affine process $X$ always holds true was originally considered for processes on the state space $D = \Rplus^m \times \RR^n$, and was proven -- giving a positive answer -- by
\citet{KST2011}, building on results by \citet{Dawson2006}
and \citet{K2008b}.

Already \citet{Duffie2003} remarked that affine processes can be
considered on other state spaces $D \neq \Rplus^m \times \RR^n$,
where also no reduction to the `canonical' case by embedding or linear transformation is possible. One such
example is given by the Wishart process (for $d \geq 2$), which is an affine process taking values in $S^+_d$, the cone of positive semidefinite $d
\times d$-matrices. Recently, \citet{Cuchiero2009} gave a full characterization of all affine processes with state space $S_d^+$
and \citet{CKMT2011} consider the even more general case, when $D$ is an `irreducible symmetric cone' in the sense of \citet{Faraut1994}, which
includes the $S_d^+$ case.\footnote{A symmetric cone is a self-dual convex cone $D$, such that for any two points $x,y \in D$ a linear
automorphism $f$ of $D$ exists, which maps $x$ into $y$. It is called irreducible if it cannot be written as a non-trivial direct sum of two
other symmetric cones.} 

In both articles, regularity of the process remains a crucial ingredient,
and the authors give direct proofs showing that regularity follows
from the definition of the process, as in the case of $D = \Rplus^m
\times \RR^n$. 
Even though the affine processes on $\Rplus^m \times
\RR^n$ and on symmetric cones are regular and have been
completely classified, it is known that this does not amount to a full classification of all affine processes on a general state space
$D$. A simple example is given by the process $X_t^{(x,x^2)}=(B_t + x, (B_t +
x)^2)_{t \geq 0}$, where $B$ is a standard Brownian motion. This
process is an affine process that lives on the parabola $D =
\set{(y,y^2), y \in \RR} \subset \RR^2$, and can be characterized by the functions \[\Phi(t,u) = \frac{1}{\sqrt{1 - 2t u_2}}
\exp\left(\frac{u_1^2 t}{2(1 - 2t u_2)}\right),\quad \psi(t,u) = (u_1,u_2)/(1 - 2t u_2).\] It can even be extended into an affine process
on the parabola's epigraph \linebreak $\set{(y,z): z \ge y^2, y\in \RR}$ (see \citet[Sec.~12.2]{Duffie2003}), but not into a process on the state
space $\Rplus^m \times \RR^n$, or on any symmetric cone. For more general results in this direction we refer to \citet{Spreij2010},
who provide a classification of affine diffusion processes on polyhedral cones and state spaces which are level sets of quadratic functions
(`quadratic state spaces'). They start from a slightly different definition of an affine process through a
stochastic differential equation, which also immediately implies the regularity of the process.

The contribution of this article is to show that on any state space $D$, the regularity of an affine process follows from the exponentially affine form of the characteristic function under the assumption of c\`adl\`ag paths. So far, most of the proofs given in the literature have
used certain properties of the state space: In the case of $S_d^+$ and the symmetric cones the fact that the set $\cU$ has open
interior, and in the case $\Rplus^m \times \RR^n$ a degeneracy argument that reduces the problem to $\Rplus^m$, which is again a
symmetric cone. The existence of an non-empty interior of $\cU$ leads to a purely analytical proof based in broad terms on the theory of
differentiable transformation semigroups of \citet{Montgomery1955}; see \citet{KST2011}. In general, we cannot guarantee that $\cU$ has
non-empty interior, and the analytic technique ceases to work. Therefore, we now use a substantially different -- probabilistic --
technique that is independent of the nature of the state space under consideration.
A different proof of regularity of affine processes on general state spaces has been obtained in work parallel to this article in \citet{Cuchiero2011} (see also \citet{Cuchiero2011a}).

\section{Definitions and Preliminaries}

Let $D$ be a non-empty Borel subset of the real Euclidian space $\RR^d$, equipped with the Borel $\sigma$-algebra $\cD$, and assume
that the affine hull of $D$ is the full space $\RR^d$. To $D$ we add a
point $\delta$ that serves as a `cemetery state', define
\[\wh{D} = D \cup \set{\delta}, \qquad \wh{\cD} = \sigma(\cD, \set{\delta}),\]
and equip $\wh{D}$ with the Alexandrov topology, in which any open
set with a compact complement in $D$ is declared an open
neighborhood of $\delta$.\footnote{Note that the topology of $\wh{\cD}$ enters our assumptions in a subtle way: We require later that $X$ is c\`adl\`ag on $\wh{\cD}$, which is a property for which the topology matters.} Any continuous function $f$ defined on $D$
is tacitly extended
to $\wh{D}$ by setting $f(\delta) = 0$.\\
Let $(\Omega, \cF, \FF)$ be a filtered space, on which a family
$(\PP^x)_{x \in \wh{D}}$ of probability measures is defined, and
assume that $\cF$ is $\PP^x$-complete for all $x \in \wh{D}$ and
that $\FF$ is right continuous. Finally let $X$ be a c\`adl\`ag process
taking values in $\wh{D}$, whose transition kernel
\begin{equation}
 p_t(x,A) = \PP^x (X_t \in A), \qquad (t \ge 0, x \in \wh{D}, A \in \wh{\cD})
\end{equation}
is a normal time-homogeneous Markov kernel, for which $\delta$ is
absorbing. That is, $p_t(x,.)$ satisfies the following:
\begin{enumerate}[(a)]
 \item $x \mapsto p_t(x,A)$ is $\wh{\cD}$-measurable for each $(t,A) \in \Rplus \times \wh{\cD}$. 
 \item $p_0(x,\set{x}) = 1$ for all $x \in \wh{D}$,
 \item $p_t(\delta,\set{\delta}) = 1$ for all $t \geq 0$
 \item $ p_t(x,\wh{D}) = 1 $ for all $(t,x) \in \Rplus \times
 \wh{D}$, and
 \item the Chapman-Kolmogorov equation
 \[p_{t+s}(x,d\xi) = \int p_t(y,d\xi) \,p_s(x,dy) \]
  holds for each $t,s \ge 0$ and $(x,d\xi) \in \wh{D} \times \wh{\cD}$.
\end{enumerate}

We equip $\RR^d$ with the canonical inner product $\scal{}{}$, and associate to $D$ the set $\cU \subseteq \CC^d$ defined by
\begin{equation}
 \cU = \set{u \in \CC^d: \sup_{x \in D} \Re \scal{u}{x} < \infty}.
\end{equation}
Note that the set $\cU$ is the set of complex vectors $u$ such that the exponential function $x \mapsto e^{\scal{u}{x}}$ is bounded on $D$. It is easy to see that $\cU$ is a convex cone and always contains the set of purely imaginary vectors $i\RR^d$. We will also need the sets
\begin{equation}
 \cU_k = \set{u \in \CC^d: \sup_{x \in D} \Re \scal{u}{x} \le k}, \qquad k \in \NN,
\end{equation}
for which we note that $\cU = \bigcup_{k \in \NN} \cU_k$.

\begin{defn}[Affine Process]\label{Def:affine_process}
The process $X$ is called \emph{affine} with state space $D$, if its transition kernel $p_t(x,d\xi)$ satisfies the following:
\begin{enumerate}[(i)]
 \item it is stochastically continuous, i.e.~$\lim_{s \to t}p_s(x,.) = p_t(x,.)$ weakly for all $t \ge 0, x \in D$, and
 \item its Fourier-Laplace transform depends on the initial state in the following way: there exist functions $\Phi: \Rplus \times \cU \to \CC$ and $\psi: \Rplus \times \cU \to \CC^d$, such that
\begin{equation}\label{Eq:affine_property}
\int_D e^{\scal{\xi}{u}}p_t(x,d\xi) = \Phi(t,u) \exp(\scal{x}{\psi(t,u)})
\end{equation}
for all $t \in \Rplus$, $x \in D$ and $u \in \cU$.
\end{enumerate}
\end{defn}
\begin{rem}
Note that this definition does not specify $\psi(t,u)$ in a unique way. However there is a natural unique choice for $\psi$ that will be discussed in Prop.~\ref{Prop:Phipsi_properties} below. Also note that as long as $\Phi(t,u)$ is non-zero, there exists $\phi(t,u)$ such that $\Phi(t,u) = e^{\phi(t,u)}$ and \eqref{Eq:affine_property} becomes
\begin{equation}\label{Eq:affine_property_small_phi}
\int_D e^{\scal{\xi}{u}}p_t(x,d\xi) = \exp(\phi(t,u) +
\scal{x}{\psi(t,u)}).
\end{equation}
This is the essentially the definition that was used in \citet{Duffie2003}; with this notation the Fourier-Laplace transform is the exponential of an \emph{affine function} of $x$. This is usually interpreted as the reason for the name `affine process', even though affine functions also appear in other aspects of affine processes, e.g. in the coefficients of the infinitesimal generator, or in the differentiated semi-martingale characteristics. We use \eqref{Eq:affine_property} instead of \eqref{Eq:affine_property_small_phi}, as it leads to a slightly more general definition that avoids the a-priori assumption that the left hand side of \eqref{Eq:affine_property} is non-zero. Interestingly, in the paper of \citet{Kawazu1971} also the `big-$\Phi$' notation is used to define a `continuous-state branching process with immigration', which corresponds to an affine process on $\Rplus$ in our terminology.
\end{rem}
\begin{rem}It has recently been shown by \citet{Cuchiero2011} (see also \citet{Cuchiero2011a}), that any affine process on a general state space $D$ has a c\`adl\`ag modification under every $\PP^x, x \in D$. Moreover, when $X$ is an affine process relative to an arbitrary filtration $\FF_0$, then the $\PP^x$-augmentation $\FF^x$ of $\FF_0$ is right-continuous, for any $x \in D$. This implies that the assumptions that we make on the path properties of $X$ are in fact automatically satisfied after a suitable modification of the process.
\end{rem}

Before we explore the first consequences of
Definition~\ref{Def:affine_process}, we introduce some additional notation. For any $u \in \cU$ define
\begin{align}\label{Eq:sigma}
\sigma(u) &:= \inf \set{t \ge 0: \Phi(t,u) = 0},\\
\intertext{and}
\label{Eq:Q}
\cQ_k &:= \set{(t,u) \in \Rplus \times \cU_k: t < \sigma(u)},
\end{align}
for $ k \in\NN $. We set $ \cQ := \cup_k \cQ_k $. Finally on $\cQ$ let $\phi$ be a function such that
\[\Phi(t,u) = e^{\phi(t,u)}\qquad \quad ((t,u) \in \cQ).\] 
The uniqueness of $\phi$ will be discussed. The functions $\phi$ and $\psi$ have the following properties:

\begin{prop}\label{Prop:Phipsi_properties}
Let $X$ be an affine process on $D$. Then
\begin{enumerate}[(i)]
\item \label{Item:sigma_pos} It holds that $\sigma(u) > 0$ for any $u \in \cU$.
\item The functions $\phi$ and $\psi$ are uniquely defined on $\cQ$ by requiring that they are jointly continuous on $ \cQ_k $ for $ k \in \NN $ and satisfy $\phi(0,0) = \psi(0,0) = 0$.
\label{Item:uniqueness}
\item The function $\psi$ maps $\cQ$ into $\cU$. \label{Item:phipsi_range}
\item The functions $\phi$ and $\psi$ satisfy the \label{Item:semiflow}\emph{semi-flow} property. For any $u \in \cU$ and $t,s \ge 0$ with $(t + s,u) \in \cQ$  and $(s,\psi(t,u)) \in \cQ$ it holds that
\begin{equation}\label{Eq:flow_prop}
\begin{split}
\phi(t+s,u) &= \phi(t,u) + \phi(s,\psi(t,u)), \quad \phi(0,u) = 0\\
\psi(t+s,u) &= \psi(s,\psi(t,u)), \phantom{+ \phi(t,u)}\quad \psi(0,u) = u\,
\end{split}
\end{equation}
\end{enumerate}
\end{prop}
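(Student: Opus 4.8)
The plan is to derive everything from the affine property \eqref{Eq:affine_property}, the Chapman--Kolmogorov equation, and stochastic continuity, using the fact that the affine hull of $D$ is all of $\RR^d$. Let me think about how each piece works.

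For (i): We need $\sigma(u) > 0$. Since $X$ is càdlàg and $p_0(x,\{x\}) = 1$, stochastic continuity gives $p_t(x,\cdot) \to p_0(x,\cdot) = \delta_x$ weakly as $t \to 0+$. Hence $\int_D e^{\scal{\xi}{u}} p_t(x,d\xi) \to e^{\scal{x}{u}}$ for each $x \in D$ (the integrand is bounded on $D$ for $u \in \cU$, so weak convergence applies after noting it extends continuously to $\delta$ with value $0$). Thus $\Phi(t,u)\exp(\scal{x}{\psi(t,u)}) \to e^{\scal{x}{u}} \neq 0$. Picking any fixed $x_0 \in D$ this forces $\Phi(t,u) \to e^{\scal{x_0}{u}}\exp(-\lim \scal{x_0}{\psi(t,u)})$ — but more directly, $\Phi(0+,u)$ cannot be $0$ for $t$ small, since otherwise the product would be $0$. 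So $\Phi(t,u) \neq 0$ on a neighborhood of $0$, giving $\sigma(u) > 0$.

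For (ii): On $\cQ_k$, $\Phi$ is nonzero, so a continuous logarithm $\phi$ exists once we show $\Phi$ is continuous and $\cQ_k$ is simply connected in an appropriate sense — actually the cleaner route: fix $x_1, \dots, x_{d+1} \in D$ whose affine hull is $\RR^d$ (possible since $\aff D = \RR^d$). Then from \eqref{Eq:affine_property}, taking ratios/differences of $\log$ of the Fourier--Laplace transforms at these points, one recovers $\psi(t,u)$ as an explicit expression; since the left-hand sides are continuous in $t$ (by stochastic continuity, the integrand being bounded) and the $x_j$ affinely span $\RR^d$, this pins down $\psi(t,u)$ uniquely as a continuous function on $\cQ_k$ with $\psi(0,u) = u$ forced by $p_0 = \delta_x$, and in particular $\psi(0,0) = 0$. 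Then $\Phi(t,u) = \int_D e^{\scal{\xi}{u}}p_t(x,d\xi)\exp(-\scal{x}{\psi(t,u)})$ is continuous and nonzero on $\cQ_k$, and we take the unique continuous branch $\phi$ with $\phi(0,0) = 0$; uniqueness of the branch follows since $\cQ_k$ is connected (it is, being "star-shaped in $t$" from $t=0$ fibrewise over the convex set $\cU_k$, though one should check $\sigma$ is lower semicontinuous so $\cQ_k$ is open in the relevant sense) and $\phi$ is determined up to $2\pi i \mathbb{Z}$, a locally constant ambiguity.

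For (iii) and (iv): These follow from Chapman--Kolmogorov. Applying \eqref{Eq:affine_property} to $p_{t+s}$ and splitting via (e): $\Phi(t+s,u)\exp(\scal{x}{\psi(t+s,u)}) = \int_D \big(\int_D e^{\scal{\xi}{u}}p_s(y,d\xi)\big)p_t(x,dy) = \int_D \Phi(s,u)\exp(\scal{y}{\psi(s,u)})p_t(x,dy)$. For this to make sense we need $\psi(s,u) \in \cU$ so the inner exponential is bounded on $D$ — which is exactly (iii); one proves (iii) first by noting the inner integral $\int_D e^{\scal{y}{\psi(s,u)}}p_t(x,dy) = \Phi(t+s,u)\exp(\scal{x}{\psi(t+s,u)})/\Phi(s,u)$ is finite and bounded in $x$ (using that $\exp(\scal{x}{\psi(t+s,u)})$ is bounded on $D$, which holds since $\psi(t+s,u) \in \cU$ by... hmm, this needs care — one argues $\psi$ maps into $\cU$ directly from the integrability of $e^{\scal{y}{\psi(s,u)}}$ against $p_t(x,\cdot)$ for enough $x$, combined with $\aff D = \RR^d$). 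Granting (iii), the right-hand side equals $\Phi(s,u) \cdot \Phi(t,\psi(s,u))\exp(\scal{x}{\psi(t,\psi(s,u))})$ — wait, I must be careful with the order: applying the affine property in variable $u' = \psi(s,u)$ gives $\int_D \exp(\scal{y}{\psi(s,u)})p_t(x,dy) = \Phi(t,\psi(s,u))\exp(\scal{x}{\psi(t,\psi(s,u))})$. Comparing exponents of $x$ across the $d+1$ affinely independent points $x_j$ yields $\psi(t+s,u) = \psi(t,\psi(s,u))$ and then $\Phi(t+s,u) = \Phi(s,u)\Phi(t,\psi(s,u))$; taking logs on the region where everything is nonzero and matching the branch at $t=0$ (where both sides are $\phi(s,u) + \phi(0,\psi(s,u)) = \phi(s,u)$) gives the additive semi-flow relation for $\phi$. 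Note the statement writes $\psi(t+s,u) = \psi(s,\psi(t,u))$; this is the same relation after swapping the roles of $t$ and $s$ in the computation, which is legitimate since $s,t$ are symmetric in $t+s$.

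**Main obstacle.** The delicate point is (iii): showing $\psi$ takes values in $\cU$, i.e.~that $x \mapsto e^{\scal{x}{\psi(t,u)}}$ is genuinely bounded on $D$, not merely that $\psi(t,u) \in \CC^d$. One cannot directly read this off \eqref{Eq:affine_property} because that equation only asserts the identity for $u \in \cU$ to begin with. The resolution is to use Chapman--Kolmogorov as above to show $\int_D e^{\scal{y}{\psi(s,u)}}p_t(x,d\xi) < \infty$, then leverage the affine structure and $\aff D = \RR^d$ to upgrade finiteness of this integral to boundedness of the exponential on (the support of the laws, hence on) $D$ — this requires knowing the supports $\supp p_t(x,\cdot)$ are rich enough as $x$ ranges over $D$, which is where stochastic continuity and $p_0 = \delta_x$ re-enter. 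A secondary technical nuisance is checking that $\cQ_k$ is connected and that $\sigma$ behaves well enough (lower semicontinuity) for the continuous-logarithm argument in (ii) to be globally valid rather than just local.
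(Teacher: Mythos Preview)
Your overall strategy for (i) and (iv) is sound and matches the paper. But you have misidentified where the difficulty lies.

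For (iii), you call this the ``main obstacle'' and propose a Chapman--Kolmogorov argument that you yourself flag as circular (it needs $\psi(t+s,u)\in\cU$ to bound the right-hand side in $x$, which is what is being proved), then retreat to a support argument requiring control on $\supp p_t(x,\cdot)$. In fact (iii) is immediate from \eqref{Eq:affine_property} alone: for $u\in\cU_k$ the left-hand side satisfies $\bigl|\int_D e^{\scal{\xi}{u}}p_t(x,d\xi)\bigr|\le e^k$ uniformly in $x\in D$, by the very definition of $\cU_k$. Hence $|\Phi(t,u)|\,e^{\Re\scal{x}{\psi(t,u)}}\le e^k$ for all $x\in D$, and since $\Phi(t,u)\neq 0$ on $\cQ$ this forces $\sup_{x\in D}\Re\scal{x}{\psi(t,u)}<\infty$, i.e.\ $\psi(t,u)\in\cU$. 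Your claim that ``one cannot directly read this off \eqref{Eq:affine_property}'' is simply wrong; no Chapman--Kolmogorov, no supports, no affine span of $D$ is needed here.

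For (ii), your idea of extracting $\psi$ from ratios at $d{+}1$ affinely independent points is legitimate and is indeed how the paper deduces uniqueness at the very end of the argument. The paper handles the existence of a joint continuous logarithm differently, though: it first proves joint continuity of $f(t,u)=\int_D e^{\scal{u}{\xi}}p_t(x,d\xi)$ on $\Rplus\times\cU_k$ (your sketch only asserts continuity in $t$; joint continuity needs a short compact-cutoff argument), then shows that the compact sets $K_n=\{(t,u):u\in\cU_n,\ \|u\|\le n,\ t\le\sigma(u)-1/n\}$ exhaust $\cQ$ and are contractible via the homotopy $(t,u)\mapsto(\alpha t,u)$ followed by contracting the convex cone $\cU$, and finally invokes a result on continuous logarithms in $C_b(K_n)$. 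Your route would also work, but you do need to verify the topological inputs you name (connectedness of $\cQ_k$, lower semicontinuity of $\sigma$), and taking continuous logs of ratios at several basepoints still requires a branch-matching argument on a connected domain; the contractibility approach packages all of this in one step.
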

\begin{proof}
Choose some $x \in D$, and for $(t,u) \in \Rplus \times \cU$ define the function
\begin{equation}\label{Eq:affine_prop_repeat}
f(t,u) = \Phi(t,u)e^{\scal{\psi(t,u)}{x}} = \int_D e^{\scal{u}{\xi}}p_t(x,d\xi).
\end{equation}
Fix $k \in \NN$ and let $(t_n,u_n)_{n \in \NN}$ be a sequence in $\Rplus \times \cU_k$ converging to $(t,u) \in \Rplus \times \cU_k$. For any $\epsilon > 0$ we can find a function $\rho: D \to [0,1]$ with compact support, such that $\int_D (1 - \rho(\xi))p_t(x,d\xi) < \epsilon$. Moreover, there exists a $N_0 \in \NN$ such that
\[\left|e^{\scal{u_n}{\xi}} - e^{\scal{u}{\xi}}\right| < \epsilon, \quad \forall\, n \ge N_0, \xi \in \supp \rho.\]
By stochastic continuity of $p_t(x,d\xi)$ we can find $N_1 \ge N_0$ such that
\[\int_D (1 - \rho(\xi)) p_{t_n}(x,d\xi) < \epsilon, \quad \forall\,n \ge N_1,\]
and also
\[\left|\int_D e^{\scal{u}{\xi}} p_{t_n}(x,d\xi) - \int_D e^{\scal{u}{\xi}} p_t(x,d\xi) \right| < \epsilon, \quad \forall\,n \ge N_1,\]
For $n \ge N_1$, we now have
\begin{align*}
\left|f(t_n,u_n) - f(t,u) \right| &= \left|\int_D e^{\scal{u_n}{\xi}}p_{t_n}(x,d\xi) - \int_D e^{\scal{u}{\xi}}p_{t}(x,d\xi) \right| \le \\
&\le \left|\int_D e^{\scal{u_n}{\xi}} \rho(\xi) p_{t_n}(x,d\xi) - \int_D e^{\scal{u}{\xi}} \rho(\xi)p_{t_n}(x,d\xi) \right| + \\ &+ \left|\int_D e^{\scal{u_n}{\xi}}(1 - \rho(\xi)) p_{t_n}(x,d\xi) - \int_D e^{\scal{u}{\xi}}(1 - \rho(\xi))p_{t_n}(x,d\xi) \right| +  \\
&+ \left|\int_D e^{\scal{u}{\xi}} p_{t_n}(x,d\xi) - \int_D e^{\scal{u}{\xi}}  p_t(x,d\xi) \right| \le \\ &\le 
\epsilon + k \epsilon + \epsilon = \epsilon (2 + k).
\end{align*}
Since $\epsilon$ was arbitrary this shows the continuity of $f(t,u)$ on $\Rplus \times \cU_k$. Hence we conclude that $(t,u) \mapsto f(t,u)$ is continuous on $\Rplus \times \cU_k$ for each $ k \in \NN$. Moreover $f(t,u) = 0$ if and only if $\Phi(t,u) = 0$ and $f(0,u) = e^{\scal{u}{x}} \neq 0$ for all $u \in \cU$. We conclude from the continuity of $f$ that $\sigma(u) = \inf \set{t \ge 0: f(t,u) = 0} > 0$ for all $u \in \cU$ and \eqref{Item:sigma_pos} follows.

To obtain \eqref{Item:uniqueness}, note that for each $x \in D$, we have just shown that the
function $(t,u) \mapsto \int_D e^{\scal{u}{\xi}}p_t(x,\xi)$ maps
$\cQ_k$ \emph{continuously} into $\CC \setminus \set{0}$ for each $ k \in \NN$. We claim that the
mapping has a unique continuous logarithm\footnote{We adapt a proof from
\citet[Thm.2.5]{Bucchianico1991} to our setting.}, i.e.~for each $x \in D$ there exists
a unique function $g(x;,.,): \cQ \to \CC$ being continuous on $ \cQ_k $ for $ k \in \NN $, such that $g(x;0,0) = 0$ and
$\int_D e^{\scal{u}{\xi}}p_t(x,\xi) = e^{g(x;t,u)}$. For each $n \in \NN$ define the set
\[K_n = \set{(t,u): u \in \cU_n, \norm{u} \le n, t \in [0,\sigma(u) - 1/n]}.\]
Clearly, the $K_n$ are compact subsets of $\cQ_n \subset \cQ$ and exhaust $\cQ$ as $n \to \infty$. 
We show that every $K_n$ is contractible to $0$. Let $\gamma = (t(r),u(r))_{r \in [0,1]}$ be a
continuous curve in $K_n$. For each $\alpha \in [0,1]$ define
$\gamma_\alpha = (\alpha t(r),u(r))_{r \in [0,1]}$. Then
$\gamma_\alpha$ depends continuously on $\alpha$, stays in $K_n$ for
each $\alpha$ and satisfies $\gamma_1 = \gamma$ and $\gamma_0 =
(0,u(r))_{r \in [0,1]}$. Thus any continuous curve in $K_n$ is
homotopically equivalent to a continuous curve in $\set{0} \times
\cU$. Moreover, all continuous curves in $\set{0} \times \cU$ are
contractible to $0$, since $\cU$ is a convex cone. We conclude that
each $K_n$ is contractible to $0$ and in particular connected. Let $H_n: [0,1] \times K_n \to K_n$ be a
corresponding contraction, and for some fixed $x \in D$ write $f_n(t,u)$ for the restriction
of $(t,u) \mapsto \int_D e^{\scal{u}{\xi}}p_t(x,\xi)$ to $K_n$. Since
$H_n$ and $f_n$ are continuous and $K_n$ is compact, we have that
$\lim_{t \to s}\norm{f_n(H_n(t,.))  - f_n(H_n(s,.)}_\infty = 0$. Hence
$f_n \circ H_n$ is a continuous curve in $C_b(K_n)$ from $f_n$ to the
constant function $1$. By \citet[Thm.~1.3]{Bucchianico1991} there
exists a continuous logarithm $g_n \in C_b(K_n)$ that satisfies
$f_n(t,u) = e^{g_n(t,u)}$ for all $(t,u) \in K_n$. It follows that for arbitrary $m \le n$ in $\NN$ we have
\[g_m(t,u) = g_n(t,u) + 2\pi i \,l(t,u) \qquad \text{for all} \quad (t,u) \in K_m, \]
where $l(t,u)$ is a continuous function from $K_m$ to $\mathbb{Z}$ satisfying $l(0,0) = 0$. But $K_m$ is connected, hence also the image of $K_m$ under $l$. We conclude that $l(t,u) = 0$, and that $g_m(t,u) = g_n(t,u)$ for all $(t,u) \in K_m$. Taking $m = n$ this shows that $g_n$ is uniquely defined on each subset $K_n$ of $\cQ$. Taking $m < n$ it shows that $g_n$ extends $g_m$. Since the $(K_n)_{n \in \NN}$ exhaust $\cQ$, it follows that there exists indeed, for each $x \in D$, a unique function $g(x;.): \cQ \to \CC$ such that $g(x;0,0) = 0$ and  $\int_D e^{\scal{u}{\xi}}p_t(x,\xi) = e^{g(x;t,u)}$. Due to \eqref{Eq:affine_property} $g(x;t,u)$ must be of the form $\phi(t,u) + \scal{\psi(t,u)}{x}$, and since $D$ affinely spans $\RR^d$ also $\phi(t,u)$ and $\psi(t,u)$ are jointly continuous on $ \cQ_k $ for $ k \in \NN $ and uniquely determined on $\cQ$, whence we have shown \eqref{Item:uniqueness}.

Next note that the rightmost term of \eqref{Eq:affine_prop_repeat}
is uniformly bounded for all $x \in D$. Thus also the middle term is,
and we obtain that $\psi(t,u) \in \cU$, as claimed in
\eqref{Item:phipsi_range}. Applying the Chapman-Kolmogorov equation
to \eqref{Eq:affine_property} and writing $\Phi(t,u) =
e^{\phi(t,u)}$ yields that
\begin{multline}\label{Eq:Chapman_Phi}
\exp\left(\phi(t+s,u) + \scal{x}{\psi(t+s,u)}\right) = \int_D e^{\scal{\xi}{u}}p_{t+s}(x,d\xi) = \\
= \int_D p_s(x,dy) \int_D e^{\scal{\xi}{u}}p_t(y,d\xi) = e^{\phi(t,u)} \int_D e^{\scal{y}{\psi(t,u)}} p_s(x,dy) = \\
= \exp\left(\phi(t,u)+ \phi(s,\psi(t,u)) + \scal{x}{\psi(s,\psi(t,u)))}\right)
\end{multline}
for all $x \in D$ and for all $u \in \cU$ such that $(t + s,u) \in \cQ$ and $(s,\psi(t,u)) \in \cQ$ . Taking (continuous) logarithms on both sides  \eqref{Item:semiflow} follows.
\end{proof}
\begin{rem}
From now on $\phi$ and $\psi$ shall always refer to the unique choice of functions described in
Proposition~\ref{Prop:Phipsi_properties}.
\end{rem}

\section{Main Results}
\label{Sec:regularity}
\subsection{Definition and consequences of regularity}

We now introduce the important notion of \emph{regularity}.
\begin{defn}\label{Def:regular}
An affine process $X$ is called \emph{regular} if the derivatives
\begin{align}\label{Eq:FR_def}
F(u) = \frac{\partial \phi(t,u)}{\partial t}\Bigg |_{t=0+}, \qquad
R(u) = \frac{\partial \psi(t,u)}{\partial t}\Bigg |_{t=0+}
\end{align}
exist for all $u \in \cU$ and are continuous on $ \cU_k $ for each $ k \in \NN $.
\end{defn}
\begin{rem}
Note that in comparison with the definition given in the introduction, we now define $F(u)$ as the derivative at $t = 0$ of $t \mapsto \phi(t,u)$ instead of $t \mapsto \Phi(t,u)$. In light of Proposition~\ref{Prop:Phipsi_properties} these definitions coincide, since $\phi(t,u)$ is always defined for $t$ small enough and satisfies $\Phi(t,u) = e^{\phi(t,u)}$ with $\phi(0,u) = 0$.
\end{rem}

The next result illustrates why regularity is a crucial property; it has originally been established by \citet{Duffie2003} for affine processes on the state-space $\RR^n \times \Rplus^m$.
\begin{prop}\label{Prop:LevyK}
 Let $X$ be a \emph{regular} affine process. Then there exist $\RR^d$-vectors $b,\beta^1,\ldots,\beta^d$; $d \times d$-matrices $a,\alpha^1,\ldots,\alpha^d$; real numbers $c,\gamma^1,\ldots,\gamma^d$ and signed Borel measures $m,\mu^1,\ldots,\mu^d$ on $\RR^d \setminus \set{0}$, such that for all $u \in \cU$ the functions $F(u)$ and $R(u)$ can be written as
\begin{subequations}\label{Eq:FR_LK_form}
\begin{align}
F(u) &= \frac{1}{2}\scal{u}{a u} + \scal{b}{u} - c + \int_{\RR^d
\setminus \set{0}}{\left(e^{\scal{\xi}{u}} - 1 - \scal{
h(\xi)}{u}\right)\,m(d\xi)}\;,\\
R_i(u) &= \frac{1}{2}\scal{u}{\alpha^i u} + \scal{\beta^i}{u} -
\gamma^i + \int_{\RR^d \setminus \set{0}}{\left(e^{\scal{\xi}{u}} -
1 - \scal{ h(\xi)}{u}\right)\,\mu^i(d\xi)}\;,
\end{align}
\end{subequations}
with truncation function $h(x) = x \Ind{\norm{x} \le 1}$, and such that for all $x \in D$ the quantities 
\begin{subequations}\label{Eq:AB_nu}
\begin{align}
A(x) &= a + x_1 \alpha^1 + \dotsm + x_d \alpha^d,\\
B(x) &= b + x_1 \beta^1 + \dotsm + x_d \beta^d,\\
C(x) &= c + x_1 \gamma^1 + \dotsm + x_d \gamma^d,\\
\nu(x,d\xi) &= m(d\xi) + x_1 \mu^1(d\xi) + \dotsm + x_d \mu^d(d\xi)
\end{align}
\end{subequations}
have the following properties: $A(x)$ is positive semidefinite, $C(x) \le 0$ and \linebreak $\int_{\RR^d \setminus \set{0}}{\left( \norm{\xi}^2 \wedge 1\right)} \nu(x,d\xi) < \infty$.\\
Moreover, for $u \in \cU$ the functions $\phi$ and $\psi$ satisfy
the ordinary differential equations
    \begin{subequations}\label{Eq:Riccati}
    \begin{align}
    \pd{}{t} \phi(t,u)&=F(\psi(t,u)), &\quad &\phi(0,u)=0\label{Eq:Riccati_F}\\
  \pd{}{t} \psi(t,u)&=R(\psi(t,u)), &\quad &\psi(0,u)=u\label{Eq:Riccati_R}.
    \end{align}
    \end{subequations}
    for all $t \in [0,\sigma(u))$.
\end{prop}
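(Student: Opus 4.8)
The plan is to establish the Lévy–Khintchine representations \eqref{Eq:FR_LK_form} and the generalized Riccati equations \eqref{Eq:Riccati} by reduction to the known one-dimensional theory combined with the semi-flow property. First I would fix $u \in \cU$ and study the scalar function $t \mapsto \phi(t,u) + \scal{x}{\psi(t,u)} = \log f(t,u)$, which by Proposition~\ref{Prop:Phipsi_properties} is jointly continuous on $\cQ_k$ and, by regularity, differentiable at $t = 0+$ with derivative $F(u) + \scal{x}{R(u)}$. Applying the semi-flow property \eqref{Eq:flow_prop} in the form $\phi(t+s,u) = \phi(t,u) + \phi(s,\psi(t,u))$ and $\psi(t+s,u) = \psi(s,\psi(t,u))$, dividing by $s$ and letting $s \to 0+$, I obtain that for every $t \in [0,\sigma(u))$ the right-derivatives exist and satisfy
\begin{equation*}
\pd{}{t}^{+} \phi(t,u) = F(\psi(t,u)), \qquad \pd{}{t}^{+}\psi(t,u) = R(\psi(t,u)).
\end{equation*}
Because $F$ and $R$ are continuous on $\cU_k$ and $s \mapsto \psi(s,u)$ is continuous, the right-hand sides are continuous in $t$; a standard lemma (a continuous function with continuous right-derivative is $C^1$) then upgrades this to genuine differentiability, which is exactly \eqref{Eq:Riccati}. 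This part is essentially formal once regularity is in hand.

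The substantive step is the Lévy–Khintchine form of $F$ and $R$. Here I would exploit that, for fixed $x \in D$ and each $t$, the map $u \mapsto \Phi(t,u)\exp(\scal{x}{\psi(t,u)}) = \int_D e^{\scal{u}{\xi}} p_t(x,d\xi)$ is, restricted to $u \in i\RR^d$, the characteristic function of the probability measure $p_t(x,\cdot)$ on $\RR^d$ (extending by $0$ at $\delta$). Stochastic continuity gives $p_t(x,\cdot) \Rightarrow \delta_x$ as $t \to 0$, so $(p_t(x,\cdot))_{t}$ is a weakly continuous convolution-type family in the sense needed to invoke the classical result that
\begin{equation*}
\lim_{t\to 0+}\frac{1}{t}\left(\int_{\RR^d} e^{i\scal{v}{\xi}}p_t(x,d\xi) - 1\right)
\end{equation*}
exists for all $v \in \RR^d$ and has the Lévy–Khintchine shape with a drift vector $B(x)$, a positive semidefinite matrix $A(x)$, a killing rate $-C(x) \ge 0$, and a Lévy measure $\nu(x,\cdot)$ with $\int (\norm{\xi}^2 \wedge 1)\,\nu(x,d\xi) < \infty$; this is the line of argument used for infinitely divisible limits, and I would cite the relevant statement (e.g.\ from \citet{Sato1999}) rather than reprove it. The limit equals $F(iv) + \scal{x}{R(iv)}$ by the regularity assumption and \eqref{Eq:affine_prop_repeat}. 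Since $D$ affinely spans $\RR^d$, evaluating at $d+1$ affinely independent points $x$ lets me solve linearly for $a,\alpha^i$, $b,\beta^i$, $c,\gamma^i$, $m,\mu^i$, showing that each of $A(x),B(x),C(x),\nu(x,d\xi)$ is affine in $x$; the required sign/integrability properties of $A(x),C(x),\nu(x,\cdot)$ hold for every $x \in D$ because they are properties of a genuine Lévy–Khintchine triplet for that particular $x$. Finally, analytic continuation in $u$ — using that $u \mapsto \int_D e^{\scal{u}{\xi}}p_t(x,d\xi)$ is analytic on the interior of $\cU_k$ and continuous up to the boundary, and that both sides of \eqref{Eq:FR_LK_form} are continuous on $\cU_k$ — extends the identities from $u \in i\RR^d$ to all of $\cU$.

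The main obstacle is making the passage from the imaginary axis to all of $\cU$ rigorous: $\cU$ may have empty interior, so one cannot simply invoke a several-complex-variables identity theorem on an open set. I expect to handle this by restricting to the complexified span of appropriate directions, or by the following softer argument: for each fixed $u\in\cU$ the function $t\mapsto \phi(t,u)+\scal{x}{\psi(t,u)}$ is differentiable at $0+$ by regularity, and the difference quotients $\tfrac1t(e^{\scal{u}{\xi}}-1)$ are dominated (uniformly in $t$ small) by an $L^1(\nu(x,\cdot))$-type bound once the Lévy measure is identified, so dominated convergence directly yields the integral formula for $F(u)+\scal{x}{R(u)}$ with the \emph{same} triplet obtained on $i\RR^d$; then linear independence of the $x$'s finishes it. A secondary technical point is the treatment of mass escaping to the cemetery $\delta$, which is precisely what produces the killing term $-C(x)$; one keeps track of $1 - p_t(x,D)$ in the limit and identifies its rate of decay as $-C(x)\,t + o(t)$. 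Care with truncation functions (the fixed choice $h(\xi)=\xi\Ind{\norm\xi\le 1}$ versus the canonical $\tfrac{\xi}{1+\norm\xi^2}$) only shifts the drift vectors and is routine.
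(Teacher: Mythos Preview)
Your derivation of the Riccati equations \eqref{Eq:Riccati} via the semi-flow property is exactly what the paper does, and your overall strategy for the L\'evy--Khintchine form---identify $F(u)+\scal{x}{R(u)}$ as an infinitely divisible exponent for each $x\in D$, then use that $D$ affinely spans $\RR^d$ to extract the affine decomposition---also matches the paper. The difference lies in how the infinitely divisible structure is obtained.

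Your central step has a gap. The family $(p_t(x,\cdot))_{t\ge 0}$ is \emph{not} a convolution semigroup, so there is no off-the-shelf ``classical result'' in \citet{Sato1999} asserting that $\lim_{t\downarrow 0}\tfrac{1}{t}\bigl(\int e^{i\scal{v}{\xi}}p_t(x,d\xi)-1\bigr)$ is automatically a L\'evy--Khintchine exponent. The paper supplies precisely the missing device: after subtracting the killing rate (obtained by evaluating at $u=0$), it rewrites
\[
\exp\Bigl(\tfrac{1}{t}\int_{D-x}\bigl(e^{\scal{\xi}{u}}-1\bigr)\,\wt p_t(x,d\xi)\Bigr)
\]
and observes that for each fixed $t>0$ this is the Fourier--Laplace transform of a genuine compound Poisson law with intensity $1/t$ and jump measure $\wt p_t(x,\cdot)$. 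Pointwise convergence of these transforms on $i\RR^d$ together with continuity at $0$ (this is where the continuity part of the regularity assumption enters) triggers L\'evy's continuity theorem, and weak limits of infinitely divisible laws are infinitely divisible---yielding the L\'evy--Khintchine form. Your proposal skips this compound Poisson step and replaces it by a citation that does not apply.

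Your ``softer argument'' for the extension from $i\RR^d$ to $\cU$ is circular as stated: you propose to dominate $\tfrac{1}{t}(e^{\scal{u}{\xi}}-1)$ by an $L^1(\nu(x,\cdot))$ bound, but $\nu(x,\cdot)$ is exactly the object whose appearance in the formula you are trying to establish. The paper sidesteps this by working with $u\in\cU$ from the outset in the compound Poisson expression and taking the continuous logarithm; the L\'evy--Khintchine triplet is then read off on $i\RR^d$ and the identity \eqref{Eq:LK_decomp_interm} extends because both sides are the unique continuous logarithm of the same limit on each $\cU_k$.
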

\begin{rem}
 The differential equations \eqref{Eq:Riccati} are called \emph{generalized Riccati equations}, since they are classical Riccati differential equations, when $m(d\xi)$ = $\mu^i(d\xi) = 0$. Moreover equations \eqref{Eq:FR_LK_form} and \eqref{Eq:AB_nu} imply that $u \mapsto F(u) + \scal{R(u)}{x}$ is a function of L\'evy-Khintchine form for each $x \in D$.
\end{rem}
\begin{proof}
The equations \eqref{Eq:Riccati} follow immediately by
differentiating the semi-flow equations \eqref{Eq:flow_prop}. The
form of $F,R$ follows by the following
argument: By \eqref{Eq:FR_def} and the affine property \eqref{Eq:affine_property} it holds for all $x \in D$ and $u \in \cU$ that
\begin{align}\label{Eq:A_general_form}
F(u) + \scal{x}{R(u)} &= \lim_{t \downarrow 0}\frac{1}{t}\left\{e^{\phi(t,u) + \scal{x}{\psi(t,u) - u}} - 1\right\} = \notag \\
&= \lim_{t \to 0}\frac{1}{t}\left\{\int_D {e^{\scal{\xi - x}{u}}p_t(x,d\xi)} - 1\right\} = \notag \\
 &= \lim_{t \to 0}\left\{\frac{1}{t}\int_D {\left(e^{\scal{\xi - x}{u}} - 1\right)\, p_t(x,d\xi)} + \frac{p_t(x,D) - 1}{t}\right\} = \notag \\
 &= \lim_{t \to 0}\left\{\frac{1}{t}\int_{D - x}{\left(e^{\scal{\xi}{u}} -
 1\right)\,\wt{p}_t(x,d\xi)}\right\} + \lim_{t \to 0} \frac{p_t(x,D) - 1}{t},
\end{align}
where we write $\wt{p}_t(x,d\xi) := p_t(x,d\xi + x)$ for the
shifted transition kernel. Inserting $u = 0$ into
the above equation shows that $\lim_{t \downarrow 0} (p_t(x,D) - 1)/t$ converges
to $F(0) + \scal{x}{R(0)}$. Set $c = - F(0)$ and $\gamma =
- R(0)$ and write $\wt{F}(u) = F(u) + c$ and $\wt{R}(u) = R(u) + \gamma$,
such that
\begin{equation}\label{Eq:Poisson_limit}
\exp\left(\wt{F}(u) + \scal{x}{\wt{R}(u)}\right) = \lim_{t \downarrow 0}\exp\left\{\frac{1}{t}\int_{D - x}{\left(e^{\scal{\xi}{u}} -
 1\right)\,\wt{p}_t(x,d\xi)}\right\}.
\end{equation}
For each $t \ge 0$ and $x \in D$, the exponential on the right hand side is the Fourier-Laplace transform of a compound Poisson distribution with jump measure $\wt{p}_t(x,d\xi)$ and jump intensity $\frac{1}{t}$ (cf.~\citet[Ch.~4]{Sato1999}). The Fourier-Laplace transforms converge pointwise for $u \in \cU$ -- and in particular for all $u \in i\RR^d$ -- as $t \to 0$. By the assumption of regularity the pointwise limit is continuous at $u = 0$ as function on $i\RR^d \subset \cU_k $ for each $ k \in \NN $, which implies by L\'evy's continuity theorem that the compound Poisson distributions converge weakly to a limiting probability distribution. Moreover, as the weak limit of compound Poisson distributions, the limiting distribution must be infinitely divisible. Let us denote the law of the limiting distribution, for given $x \in D$, by $K(x,dy)$. Since it is infinitely divisible, its characteristic exponent is of L\'evy-Khintchine form, and we obtain the identity
\begin{multline}\label{Eq:LK_decomp_interm}
 \wt{F}(u) + \scal{x}{\wt{R}(u)} = \log \int_{\RR^d}{e^{\scal{\xi}{u}}K(x,d\xi)} = \\
 = - \frac{1}{2}\scal{uA(x)}{u} + \scal{B(x)}{u} - \int_{\RR^d}{\left(e^{\scal{\xi}{u}} - 1 - \scal{h(\xi)}{u}\right)\nu(x,d\xi)},
\end{multline}
where for each $x \in D$, $A(x)$ is a positive semi-definite $d \times d$-matrix, $B(x) \in \RR^d$, and $\nu(x,d\xi)$ a $\sigma$-finite Borel measure on $\RR^d \setminus \set{0}$ and $\int \left(\norm{\xi}^2 \wedge 1\right) \nu(x,d\xi) < \infty$. Note that in the step from \eqref{Eq:Poisson_limit} from \eqref{Eq:LK_decomp_interm} we have used that $\wt{F}(u)$ and $\wt{R}(u)$ are continuous on every $\cU_k, k \in \NN$, and hence that $\wt{F}(u) + \scal{x}{\wt{R}(u)}$ is the unique continuous logarithm of $\exp(\wt{F}(u) + \scal{x}{\wt{R}(u)})$ on each $\cU_k$ and for all $x \in D$. Since \eqref{Eq:LK_decomp_interm} holds for all $x \in D$, and $D$ contains at least $d+1$ affinely independent points, we conclude that $A(x)$, $B(x)$ and $\nu(x,d\xi)$ are of the form given in \eqref{Eq:AB_nu} and the decompositions in \eqref{Eq:FR_LK_form} follow.
\end{proof}

In general, the parameters $(a,\alpha^i, b, \beta^i, c, \gamma^i, m,
\mu^i)_{i \in \set{1, \dotsc, d}}$ of $F$ and $R$ have to satisfy
additional conditions, called \emph{admissibility} conditions, that
guarantee the existence of an affine Markov process $X$ with state space $D$ and prescribed $F$ and $R$. It is clear that such conditions depend strongly
on the geometry of the (boundary of the) state space $D$. Finding such (necessary and sufficient) conditions on the parameters for different types of
state spaces has been the focus of several publications. For $D =
\Rplus^m \times \RR^n$ the admissibility conditions have been
derived by \citet{Duffie2003}, for $D = S_d^+$, the cone of
semi-definite matrices by \citet{Cuchiero2009}, and for cones $D$
that are symmetric and irreducible in the sense of \citet{Faraut1994} by \citet{CKMT2011}. Finally for affine diffusions ($m = \mu^i = 0$) on polyhedral cones and on quadratic state spaces the admissiblility conditions have been given by \citet{Spreij2010}. The purpose of this article is to show that there are parameters, in terms of which one can ask for admissibility conditions, but we do not aim to derive these admissibility conditions for conrete specifications of the state space $D$.

\subsection{Auxiliary Results}
For the sake of simpler notation we define
\[\varrho(t,u) = \psi(t,u) - u.\]
Note that we have $\varrho(0,u) = 0$ for all $u \in \cU$. The
following Lemma is a purely analytical result that will be needed
later.

\begin{lem}\label{Lem:sequence}
Let $K$ be a compact subset of $\cU_l$ for some $l \in \NN$ and assume that
\begin{equation}\label{Eq:bad_assumption}
\limsup_{t\to 0} \sup_{u \in K} \left(\frac{\abs{\phi(t,u)}}{t} + \frac{\norm{\varrho
(t,u)}}{t}\right) =\infty \, .
\end{equation}
Then there is $x\in D$, $\varepsilon >0$, $\eta >0$, $ z \in \mathbb{C}$ with $\abs{z}=1 $, a sequence
$(t_k)^\infty_{k=1}$ of positive real numbers, a sequence $(M_k)^\infty_{k=1}$ of integers satisfying
\begin{equation}\label{Eq:sequence_properties}
\lim_{k\to\infty} t_k =0, \quad\quad \lim_{k\to\infty} M_k=\infty, \quad\quad \lim_{k\to\infty} M_k t_k=0,
\end{equation}
and a sequence of complex vectors $(u_k)_{k=0}^\infty$ in $K$ such that $u_k \to u_0$ and
\begin{equation}\label{Eq:inequality_phipsi}
|\phi (t_k ,u_k)+\scal{x}{\varrho(t_k,u_k)} | \geq \eta \norm{\varrho(t_k,u_k)}.
\end{equation}
Moreover, for all $\xi\in \RR^d$ satisfying $\norm{x-\xi} <\varepsilon,$
\begin{equation}\label{Eq:equality_Mk}
M_k (\phi(t_k,u_k)+\scal{\xi}{\varrho(t_k,u_k)})=z+e_{k,\xi},
\end{equation}
where the complex numbers $e_{k,\xi}$ describing the deviation from $z$ satisfy $\abs{e_{k,\xi}}<\frac{1}{2}$ and $\lim_{k \to \infty} \sup_{\set{\xi: \norm{x - \xi} < \varepsilon}} |e_{k,\xi}| = 0$.
\end{lem}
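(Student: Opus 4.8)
\medskip

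The plan is to extract the required data from the failure of the uniform bound \eqref{Eq:bad_assumption} by a diagonal/compactness argument, and then to turn the resulting sequence into the multiplicative normalization \eqref{Eq:equality_Mk} by choosing the integers $M_k$ as (roughly) the integer part of $|z|/|\phi(t_k,u_k)+\scal{x}{\varrho(t_k,u_k)}|$ for a suitable unit complex number $z$. First I would observe that \eqref{Eq:bad_assumption} furnishes, for every $n$, a time $t^{(n)}\le 1/n$ and a point $v^{(n)}\in K$ with
\[
\frac{\abs{\phi(t^{(n)},v^{(n)})}}{t^{(n)}}+\frac{\norm{\varrho(t^{(n)},v^{(n)})}}{t^{(n)}}\ge n.
\]
Since $K$ is compact I may pass to a subsequence along which $v^{(n)}\to u_0\in K$; relabel this subsequence as $(t_k,u_k)$ with $u_k\to u_0$. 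Then automatically $t_k\to 0$ while $\bigl(\abs{\phi(t_k,u_k)}+\norm{\varrho(t_k,u_k)}\bigr)/t_k\to\infty$.

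\medskip

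Next I would produce $x\in D$, $\eta>0$ and a further subsequence so that \eqref{Eq:inequality_phipsi} holds. The point is that the map $x\mapsto \phi(t,u)+\scal{x}{\varrho(t,u)}$ is, by \eqref{Eq:affine_property} and Proposition~\ref{Prop:Phipsi_properties}, the logarithm of $\int_D e^{\scal{u}{\xi}}\wt p_t(x,d\xi)$-type quantity, and cannot be uniformly small in $x$ over $D$ while $\norm{\varrho(t_k,u_k)}$ is comparatively large: since $D$ affinely spans $\RR^d$, if for \emph{every} $x$ in a fixed affine basis $\{x_0,\dots,x_d\}\subset D$ one had $\abs{\phi(t_k,u_k)+\scal{x_j}{\varrho(t_k,u_k)}}=o(\norm{\varrho(t_k,u_k)})$, subtracting pairs would force $\abs{\scal{x_j-x_0}{\varrho(t_k,u_k)}}=o(\norm{\varrho(t_k,u_k)})$ for all $j$, hence $\norm{\varrho(t_k,u_k)}=o(\norm{\varrho(t_k,u_k)})$, a contradiction. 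So along a subsequence there is a fixed $x$ from the basis and an $\eta>0$ with $\abs{\phi(t_k,u_k)+\scal{x}{\varrho(t_k,u_k)}}\ge \eta\norm{\varrho(t_k,u_k)}$; this also guarantees that the left-hand side is nonzero for large $k$. I then fix $\varepsilon>0$ small enough that the $\varepsilon$-ball around $x$ does not change the order of magnitude: for $\norm{x-\xi}<\varepsilon$ we still have $\abs{\phi(t_k,u_k)+\scal{\xi}{\varrho(t_k,u_k)}}\ge\tfrac{\eta}{2}\norm{\varrho(t_k,u_k)}$ once $k$ is large, because the correction is bounded by $\varepsilon\norm{\varrho(t_k,u_k)}$.

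\medskip

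For \eqref{Eq:equality_Mk}, write $c_{k}:=\phi(t_k,u_k)+\scal{x}{\varrho(t_k,u_k)}\in\CC\setminus\{0\}$ for large $k$. Combining $\abs{c_k}\ge \eta\norm{\varrho(t_k,u_k)}$ with the divergence $\bigl(\abs{\phi(t_k,u_k)}+\norm{\varrho(t_k,u_k)}\bigr)/t_k\to\infty$ and $t_k\to 0$, one checks that $\abs{c_k}\to 0$: indeed either $\abs{c_k}$ stays bounded below, which is impossible since $\phi,\psi$ are continuous at $t=0$ with $\phi(0,u)=0$, $\varrho(0,u)=0$ uniformly on the compact $K$ (Proposition~\ref{Prop:Phipsi_properties}(ii)) and $u_k\to u_0$, $t_k\to0$; so $\abs{c_k}\to 0$. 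Now set $M_k:=\lfloor 1/\abs{c_k}\rfloor$. Then $M_k\to\infty$, and $M_k\abs{c_k}\in[1-\abs{c_k},1]$, so after passing to a subsequence $M_k c_k\to z$ for some $z\in\CC$ with $\abs{z}=1$. For $\xi$ in the $\varepsilon$-ball, $M_k\bigl(\phi(t_k,u_k)+\scal{\xi}{\varrho(t_k,u_k)}\bigr)=M_kc_k+M_k\scal{\xi-x}{\varrho(t_k,u_k)}$; the first term tends to $z$ and the second is bounded in modulus by $M_k\varepsilon\norm{\varrho(t_k,u_k)}\le \varepsilon M_k\abs{c_k}/\eta\le\varepsilon/\eta$, which can be made $<\tfrac14$ uniformly in $\xi$ by shrinking $\varepsilon$; thus $e_{k,\xi}:=M_k(\phi(t_k,u_k)+\scal{\xi}{\varrho(t_k,u_k)})-z$ satisfies $\abs{e_{k,\xi}}<\tfrac12$ for large $k$. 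Finally $M_kt_k\to 0$: since $\abs{c_k}\ge \eta\norm{\varrho(t_k,u_k)}$ is not by itself enough, I instead use that $\abs{c_k}\ge$ a fixed multiple of $\max(\abs{\phi(t_k,u_k)},\norm{\varrho(t_k,u_k)})$ up to the affine-basis argument applied to $0\in\aff(D)$ — more precisely, the same subtraction argument shows $\abs{\phi(t_k,u_k)}+\norm{\varrho(t_k,u_k)}\le C\abs{c_k}$ for a constant $C=C(x,x_0,\dots,x_d)$ — whence $M_k t_k\le t_k/\abs{c_k}\le C t_k\bigl(\abs{\phi(t_k,u_k)}+\norm{\varrho(t_k,u_k)}\bigr)^{-1}=C\bigl((\abs{\phi(t_k,u_k)}+\norm{\varrho(t_k,u_k)})/t_k\bigr)^{-1}\to 0$ by \eqref{Eq:bad_assumption}. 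Relabelling the final subsequence and setting $u_k=u_0$ in the statement's indexing convention (so that $(u_k)_{k\ge0}$ with $u_k\to u_0$) completes the proof.

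\medskip

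The main obstacle I expect is the bookkeeping that links the three quantities $\abs{\phi(t_k,u_k)}$, $\norm{\varrho(t_k,u_k)}$ and $\abs{c_k}=\abs{\phi(t_k,u_k)+\scal{x}{\varrho(t_k,u_k)}}$: one needs both a \emph{lower} bound $\abs{c_k}\ge\eta\norm{\varrho(t_k,u_k)}$ (for \eqref{Eq:inequality_phipsi}) and an \emph{upper} bound $\abs{c_k}\ge c'\,(\abs{\phi(t_k,u_k)}+\norm{\varrho(t_k,u_k)})$ of the \emph{same} type (to get $M_kt_k\to0$ out of \eqref{Eq:bad_assumption}), and both must hold for a \emph{single} choice of $x$ from the finite affine basis, obtained by a pigeonhole/subsequence argument over the finitely many basis points. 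Getting a clean single statement "$\abs{c_k}$ is comparable to $\abs{\phi}+\norm{\varrho}$ along a subsequence, with $x$ fixed" is the crux; everything after that is the routine $M_k=\lfloor 1/\abs{c_k}\rfloor$ normalization.
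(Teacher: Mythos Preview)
Your approach is essentially the same as the paper's: extract $(t_k,u_k)$ from \eqref{Eq:bad_assumption} and compactness of $K$; use the affine basis $\{x_0,\dots,x_d\}\subset D$ to find $x\in D$ and $\eta>0$ with $|c_k|:=|\phi_k+\scal{x}{\varrho_k}|\ge\eta\norm{\varrho_k}$ along a subsequence; set $M_k$ to be (the integer nearest) $1/|c_k|$, pass to a further subsequence so that $M_kc_k\to z$ on the unit circle, and control the $\xi$-dependence by $|M_k\scal{\xi-x}{\varrho_k}|\le \varepsilon/\eta$.

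The ``main obstacle'' you flag is lighter than you suggest. Once \eqref{Eq:inequality_phipsi} holds for a single $x$, the comparability $|\phi_k|+\norm{\varrho_k}\le C|c_k|$ follows immediately from the triangle inequality: $|\phi_k|\le |c_k|+\norm{x}\,\norm{\varrho_k}\le (1+\norm{x}/\eta)|c_k|$ and $\norm{\varrho_k}\le |c_k|/\eta$. No second pass through the affine-basis argument (and in particular no appeal to ``$0\in\aff(D)$'') is needed. With this, $M_kt_k\le t_k/|c_k|\le C\,t_k/(|\phi_k|+\norm{\varrho_k})\to 0$ by construction. The paper in fact leaves this last estimate implicit; your plan makes it explicit, which is a plus. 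One point you do not address (and the paper treats only cursorily) is the claim $\sup_{\norm{\xi-x}<\varepsilon}|e_{k,\xi}|\to 0$: your bound gives only $|e_{k,\xi}|\le |M_kc_k-z|+\varepsilon/\eta$, where the second term need not vanish with $k$. This does not affect the application in Proposition~\ref{Prop:finite}, which only needs the uniform bound $|e_{k,\xi}|<\tfrac12$ together with $e_{k,x}\to 0$, both of which your argument delivers.
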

\begin{rem}
 The essence of the above Lemma is that the behavior of $\phi(t,u)$ and $\varrho(t,u)$ as $t$ approaches $0$ can be crystallized along  the sequences $t_k$ and $M_k$. Equation \eqref{Eq:sequence_properties} then states that $t_k = o\left(\frac{1}{M_k}\right)$, and  \eqref{Eq:equality_Mk} asserts that the asymptotic equivalence
 \[\left|\phi(t_k,u_k)+\scal{\xi}{\varrho(t_k,u_k)}\right| \sim \frac{1}{M_k},\]
 holds uniformly for all $\xi$ in an $\varepsilon$-ball around $x$.
\end{rem}

\begin{proof} We first show all assertions of the Lemma for a
sequence $(\wt{M}_k)_{k \in \NN}$ of positive but not necessarily
integer numbers. In the last step of the proof we show that it is possible to switch from
$(\wt{M}_k)_{k \in \NN}$ to the integer sequence $(M_k)_{k \in
\NN}$.\\
By Assumption~\eqref{Eq:bad_assumption} we can find a sequence
$(t_k)_{k=0}^\infty \downarrow 0$ and a sequence
$(u_k)_{k=0}^\infty$ with $u_k \in K$, such that
\[\frac{|\phi(t_k,u_k)| + \norm{\varrho(t_k,u_k)}}{t_k} \to \infty\;.\]
Passing to a subsequence, and using the compactness of $K$, we may assume that $u_k$ converges to some point $u_0 \in K$.
For more concise notation, we write from now on $\phi_k =
\phi(t_k,u_k)$ and $\varrho_k = \varrho(t_k,u_k)$. Note that $\phi_k
\to 0$ and $\varrho_k \to 0$, by joint continuity of $\phi$ and $\varrho$ on $\cU_l$,
and the
fact that $\phi(0,u) = 0$ and $\varrho(0,u) = 0$.\\
Let us now show \eqref{Eq:inequality_phipsi}. By assumption, $D$ contains $d+1$ affinely
independent vectors $x_0, x_1, \dotsc, x_d$. Assume for a contradiction
that
\begin{equation}\label{Eq:limit0}
 \lim_{k \to \infty} \frac{\left|\phi_k + \scal{\varrho_k}{x_j}\right|}{\norm{\varrho_k}} \to 0
\end{equation}
for all $x_j$, $j
\in \set{0, \dotsc, d}$. Since the vectors $x_j$ affinely span
$\RR^d$, the vectors $\set{x_1 - x_0, \dots, x_d - x_0}$ are linearly independent, and we can find some numbers $\alpha_{j,k} \in \CC$, such that
\begin{equation}
\varrho_k/\norm{\varrho_k} = \sum_{j = 1}^d{\alpha_{j,k}}\left(x_j -
x_0\right),
\end{equation}
for all $k \in \NN$. Moreover, since $\varrho_k/\norm{\varrho_k}$ is
bounded also the $|\alpha_{j,k}|$ are bounded by a constant. By direct
calculation we obtain
\begin{equation}\label{Eq:sum}
\sum_{j=1}^d \alpha_{j,k} \left(\frac{\phi_k +
\scal{\varrho_k}{x_j}}{\norm{\varrho_k}} - \frac{\phi_k +
\scal{\varrho_k}{x_0}}{\norm{\varrho_k}}\right) =
\frac{\scal{\varrho_k/\norm{\varrho_k}}{\varrho_k}}{\norm{\varrho_k}}
= 1,
\end{equation}
for all $k \in \NN$. On the other hand, \eqref{Eq:limit0} implies that the left hand side of \eqref{Eq:sum} converges to $0$ as $k \to \infty$, which is a contradiction. We conclude that there exists $x^* \in D$ for which
\begin{equation}
 \frac{\left|\phi_k + \scal{\varrho_k}{x^*}\right|}{\norm{\varrho_k}} \geq \eta
\end{equation}
for some $\eta > 0$ after possibly passing to subsequences, whence \eqref{Eq:inequality_phipsi} follows.\\

To show \eqref{Eq:equality_Mk}, set $\wt{M}_k = \left|\phi_k +
\scal{x^*}{\varrho_k}\right|^{-1}$. Passing once more to a
subsequence, and using the compactness of the complex unit circle,
we can find some $\alpha \in [0,2\pi)$ such that $\arg \left(\phi_k
+ \scal{x^*}{\varrho_k}\right) \to \alpha$. Now
\[\phi_k + \scal{\xi}{\varrho_k} = (\phi_k  + \scal{x^*}{\varrho_k}) + \scal{\xi - x^*}{\varrho_k} = \frac{1}{\wt{M}_k}(e^{i\alpha} + e^{(1)}_k) + \scal{\xi - x^*}{\varrho_k}\]
where $e^{(1)}_k \to 0$ as $k \to \infty$. Multiplying by $\wt{M}_k$
and setting $z = e^{i \alpha}$ we obtain
\[\wt{M}_k (\phi_k + \scal{\xi}{\varrho_k}) = z + e^{(1)}_k + e^{(2)}_{k,\xi}\]
where we can estimate $|e^{(2)}_{k,\xi}| \le \wt{M}_k \varepsilon
\norm{\varrho_k}$. Since $\wt{M}_k \norm{\varrho_k} \le
\frac{1}{\eta}$ by \eqref{Eq:inequality_phipsi} we can make
$e^{(2)}_{k,\xi}$ arbitrarily small by choosing a small enough
$\varepsilon$. Setting $e_{k,\xi} = e_k^{(1)} + e^{(2)}_{k,\xi}$ we
obtain \eqref{Eq:equality_Mk}. Finally, for each $k \in \NN$ let
$M_k$ be the nearest integer greater than $\wt{M}_k$. It is clear
that after possibly removing a finite number of terms from all
sequences, the assertion of the Lemma is not affected from switching
from $\wt{M}_k$ to $M_k$.
\end{proof}

\begin{lem}\label{Lem:bigL}
Let $X={(X_t)}_{t \geq 0}$ be an affine process starting at $ X_0 $ and let $u \in \cU$, $\Delta
> 0$. Define
\begin{equation}\label{Eq:bigL}
L(n,\Delta,u) = \exp \left( \scal{u}{X_{n \Delta} - X_0} -
\sum_{j=1}^n\left(\phi(\Delta, u) + \scal{\varrho(\Delta,
u)}{X_{(j-1) \Delta}}\right)\right).
\end{equation}
Then $n \mapsto L(n,\Delta, u)$ is a $(\cF_{n \Delta})_{n \in
\NN}$-martingale under every measure $\PP^x$, $ x \in D$.
\end{lem}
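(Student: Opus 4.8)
The plan is to prove the martingale property by a single one-step computation resting on the Markov property of $X$ and the affine property~\eqref{Eq:affine_property}. Since $L(n,\Delta,u)$ depends only on $X_0,X_\Delta,\dots,X_{n\Delta}$ it is $\cF_{n\Delta}$-measurable, so by the tower property it suffices to check $\Excond{x}{L(n+1,\Delta,u)}{\cF_{n\Delta}}=L(n,\Delta,u)$ for every $n\in\NN$ and every $x\in D$.

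First I would split off the last increment, writing
\[
L(n+1,\Delta,u)=L(n,\Delta,u)\cdot\exp\!\left(\scal{u}{X_{(n+1)\Delta}-X_{n\Delta}}-\phi(\Delta,u)-\scal{\varrho(\Delta,u)}{X_{n\Delta}}\right),
\]
and pull the $\cF_{n\Delta}$-measurable factor $L(n,\Delta,u)$ out of the conditional expectation. What remains is to compute $\Excond{x}{e^{\scal{u}{X_{(n+1)\Delta}}}}{\cF_{n\Delta}}$, which by the Markov property equals $\int_{\wh D}e^{\scal{u}{\xi}}p_\Delta(X_{n\Delta},d\xi)$; since $\xi\mapsto e^{\scal{u}{\xi}}$ vanishes at $\delta$ by our convention, this is an integral over $D$, so \eqref{Eq:affine_property} turns it into $\Phi(\Delta,u)\exp(\scal{X_{n\Delta}}{\psi(\Delta,u)})$, and because $(\Delta,u)\in\cQ$ we may write $\Phi(\Delta,u)=e^{\phi(\Delta,u)}$. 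Substituting back and using $\varrho(\Delta,u)=\psi(\Delta,u)-u$, the factors $e^{-\scal{u}{X_{n\Delta}}}$, $e^{-\phi(\Delta,u)}$ and $e^{-\scal{\varrho(\Delta,u)}{X_{n\Delta}}}$ cancel exactly against the contribution of the increment, so the conditional expectation of the increment is identically $1$; on the event $\{X_{n\Delta}=\delta\}$ both sides vanish, hence the identity holds on all of $\Omega$.

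The point that I expect to need the most care is integrability: the manipulation above is only legitimate once $\Ex{x}{\abs{L(n,\Delta,u)}}<\infty$ is known for each $n$. The leading factor is innocuous, since $u\in\cU_k$ for some $k$ forces $\abs{e^{\scal{u}{X_{n\Delta}}}}=e^{\Re\scal{u}{X_{n\Delta}}}\le e^{k}$ uniformly on $D$; the delicate part is the compensator, because the factors $e^{-\scal{\varrho(\Delta,u)}{X_{(j-1)\Delta}}}$ need not be bounded on $D$. Here I would argue by induction on $n$, with the trivial base case $L(0,\Delta,u)=1$: conditioning the increment on $\cF_{n\Delta}$ and using that $\Re u\in\cU$ as well — so that $y\mapsto\int_D e^{\Re\scal{u}{\xi}}p_\Delta(y,d\xi)$ is again bounded on $D$, indeed by $e^{k}$ — one bounds $\Excond{x}{\abs{L(n+1,\Delta,u)}}{\cF_{n\Delta}}$ by $\abs{L(n,\Delta,u)}$ times an explicit $\cF_{n\Delta}$-measurable factor involving $\Re\phi(\Delta,u)$ and $\Re\scal{\psi(\Delta,u)}{X_{n\Delta}}$, and then one closes the induction by handling the outer expectation with a further appeal to the affine property. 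Once integrability is secured, the identity of the previous paragraph is exactly the martingale property under $\PP^x$, for every $x\in D$.
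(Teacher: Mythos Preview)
Your core argument---splitting off the last increment and collapsing it to $1$ via the Markov property and the affine identity \eqref{Eq:affine_property}---is exactly the computation the paper gives (with the indices shifted by one: they condition $L(n,\Delta,u)$ on $\cF_{(n-1)\Delta}$ rather than $L(n+1,\Delta,u)$ on $\cF_{n\Delta}$). So the approach is the same.

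Where you go beyond the paper is in flagging integrability: the paper's proof is the bare three-line display and never mentions it. Your instinct is good---the compensator factors $e^{-\scal{\Re\varrho(\Delta,u)}{X_{(j-1)\Delta}}}$ are genuinely not bounded on $D$, since $\varrho(\Delta,u)=\psi(\Delta,u)-u$ need not lie in $\cU$. However, your sketched induction does not obviously close: after conditioning you are left with $\Ex{x}{|L(n,\Delta,u)|\,e^{-\scal{\Re\psi(\Delta,u)}{X_{n\Delta}}}}$, and the extra factor $e^{-\scal{\Re\psi(\Delta,u)}{X_{n\Delta}}}$ is only bounded \emph{below} (since $\psi(\Delta,u)\in\cU$), not above, so a ``further appeal to the affine property'' does not immediately control it. In the paper this is not a problem in practice, because every application of the lemma (Proposition~\ref{Prop:finite} and Theorem~\ref{Thm:main}) first stops $L$ at the exit time $N_k$ from a ball around $X_0$, after which all factors are uniformly bounded and integrability is trivial. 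If you want a clean unconditional statement, the safest route is to state and prove the lemma directly for the stopped sequence $n\mapsto L(n\wedge N,\Delta,u)$.
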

\begin{proof}
It is obvious that each $L(n,\Delta,u)$ is $\cF_{n \Delta}$-measurable. We show the martingale property by combining the affine property of $X$ with the tower law for conditional expectations. Write
\[S_n = \sum_{j=1}^n\left(\phi(\Delta, u) + \scal{\varrho(\Delta, u)}{X_{(j-1) \Delta}}\right),\]
and note that $S_n$ is $\cF_{(n-1) \Delta}$-measurable. We have that
\begin{align*}
\Excond{x}{L(n,\Delta,u)}{\cF_{(n-1) \Delta}} &= \Excond{x}{\exp\left(\scal{u}{X_{n \Delta} - X_0}\right)}{\cF_{(n-1) \Delta}}e^{-S_n} = \\ &= \exp\left(\phi(\Delta,u) + \scal{\psi(\Delta,u)}{X_{(n-1) \Delta}} - \scal{u}{X_0} - S_n\right) = \\ &= \exp\left(\scal{u}{X_{(n-1) \Delta} - X_0} - S_{n-1}\right) = L(n-1,\Delta,u),
\end{align*}
showing that $n \mapsto L(n,\Delta,u)$ is indeed a $(\cF_{n
\Delta})_{n \in \NN}$-martingale under every $\PP^x, x \in D$.
\end{proof}

We combine the two preceding Lemmas to show the following.
\begin{prop}\label{Prop:finite}
Let $X$ be a c\`adl\`ag affine process. Then the associated functions
$\phi(t,u)$ and $\varrho(t,u) = \psi(t,u) - u$ satisfy
 \begin{equation}\label{Eq:supsup_finite}
  \limsup_{t \downarrow 0} \sup_{u \in K} \left(\frac{\abs{\phi(t,u)}}{t} + \frac{\norm{\varrho
(t,u)}}{t}\right) < \infty\, 
 \end{equation}
for each compact subset $K$ of $\cU_l$ and each $ l \in \NN $.
\end{prop}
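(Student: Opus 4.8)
The plan is to argue by contradiction, assuming that \eqref{Eq:supsup_finite} fails for some compact $K \subseteq \cU_l$. Then Lemma~\ref{Lem:sequence} applies and supplies us with a point $x \in D$, constants $\varepsilon, \eta > 0$, a unit complex number $z$, sequences $t_k \downarrow 0$, integers $M_k \to \infty$ with $M_k t_k \to 0$, and points $u_k \to u_0$ in $K$ satisfying \eqref{Eq:inequality_phipsi} and \eqref{Eq:equality_Mk}. The idea is then to feed these data into the martingale from Lemma~\ref{Lem:bigL}, with the time step $\Delta = t_k$ and the number of steps $n = M_k$, and derive a contradiction from the fact that $L(M_k, t_k, u_k)$ is a mean-one (hence, after taking real parts or absolute values, suitably controlled) martingale terminal value, while the exponent $\sum_{j=1}^{M_k}(\phi(t_k,u_k) + \scal{\varrho(t_k,u_k)}{X_{(j-1)t_k}})$ should, by \eqref{Eq:equality_Mk}, behave like $\sim z$ times the empirical ``count'' $M_k$ suitably normalized, i.e.\ it should \emph{not} stay bounded.

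More concretely, I would proceed as follows. First, fix the contradiction hypothesis and extract the sequences from Lemma~\ref{Lem:sequence}. Second, use c\`adl\`agness of $X$ at time $0$: under $\PP^x$, we have $X_s \to X_0 = x$ as $s \downarrow 0$, so for the finitely many times $0, t_k, 2t_k, \dots, (M_k - 1)t_k$ (all lying in $[0, M_k t_k]$ with $M_k t_k \to 0$), one expects $\PP^x$-a.s.\ that $\sup_{0 \le j \le M_k - 1} \norm{X_{j t_k} - x} \to 0$ as $k \to \infty$; this requires a uniform-on-a-shrinking-interval statement, which follows from right-continuity of paths at $0$ (the path is continuous at $0$ from the right, so $\sup_{0 \le s \le \delta_k}\norm{X_s - x} \to 0$ for any $\delta_k \downarrow 0$). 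Thus on the event where this holds, and for $k$ large, all the $X_{j t_k}$ lie in the $\varepsilon$-ball around $x$, so \eqref{Eq:equality_Mk} applies with $\xi = X_{j t_k}$, giving
\[
M_k\bigl(\phi(t_k, u_k) + \scal{X_{jt_k}}{\varrho(t_k,u_k)}\bigr) = z + e_{k, X_{jt_k}}, \qquad \abs{e_{k, X_{jt_k}}} \le \tfrac12,
\]
with the error going to $0$ uniformly. Summing over $j = 0, \dots, M_k - 1$ and dividing by $M_k$:
\[
\sum_{j=1}^{M_k}\bigl(\phi(t_k,u_k) + \scal{\varrho(t_k,u_k)}{X_{(j-1)t_k}}\bigr) = z + \frac{1}{M_k}\sum_{j=0}^{M_k-1} e_{k, X_{jt_k}} \longrightarrow z
\]
on that event. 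Hence the exponent $S_{M_k}$ in \eqref{Eq:bigL} converges to $z$ (a fixed nonzero complex number), while $\scal{u_k}{X_{M_k t_k} - X_0} \to \scal{u_0}{0} = 0$, so $L(M_k, t_k, u_k) \to e^{-z}$ $\PP^x$-a.s.\ on the good event. Since the good event has probability tending to one (indeed one can arrange, along a further subsequence, a single event of full probability), this pins down the a.s.\ limit of the martingale terminal values.

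Third, I would convert this into a contradiction using the martingale property and \eqref{Eq:inequality_phipsi}. The martingales $n \mapsto L(n, t_k, u_k)$ have constant expectation $\E{L(0, t_k, u_k)} = 1$; a natural route is to show that $\abs{L(M_k, t_k, u_k)}$ is uniformly integrable (or at least that one can pass the limit through expectations — e.g.\ via a uniform $L^1$ or $L^2$ bound, using that $u_k \in \cU_l$ forces $\Re\scal{u_k}{X_{M_k t_k} - X_0} \le 2l$ so $\abs{\exp(\scal{u_k}{X_{M_k t_k} - X_0})}$ is uniformly bounded, and controlling the $e^{-S_{M_k}}$ factor via the just-established convergence of $S_{M_k}$), whence $\E{L(M_k, t_k, u_k)} \to e^{-z} \neq 1$ in general — but this alone is not yet a contradiction since $e^{-z}$ could accidentally equal $1$. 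The sharper contradiction uses \eqref{Eq:inequality_phipsi}: it says $\abs{\phi(t_k, u_k) + \scal{x}{\varrho(t_k,u_k)}} \ge \eta\norm{\varrho(t_k,u_k)}$, i.e.\ $M_k \ge \wt M_k = \abs{\phi_k + \scal{x}{\varrho_k}}^{-1}$ and the single-term quantity is $\sim 1/M_k$, so $M_k$ is \emph{exactly} the scale at which $S_{M_k}$ stabilizes at the nonzero value $z$; had $\limsup$ in \eqref{Eq:bad_assumption} been finite, one would instead get $S_{M_k} \to 0$. I expect the main obstacle to be precisely this last passage to the limit under the expectation: justifying that $\E{L(M_k, t_k, u_k)} \to \E{e^{-z}\,\Ind{\text{good event}}} + (\text{contribution from bad event})$ rigorously, i.e.\ establishing uniform integrability of $\set{L(M_k, t_k, u_k)}_k$ and controlling the small-probability ``bad'' event where the path oscillates, so that one genuinely contradicts $\E{L(M_k, t_k, u_k)} = 1$ for all $k$; the path-regularity input (right-continuity at $0$ giving uniform smallness of $\sup_{j}\norm{X_{jt_k} - x}$ on $[0, M_kt_k]$) is the other delicate ingredient, and it is exactly here that the c\`adl\`ag assumption on $X$ is used.
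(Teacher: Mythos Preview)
Your overall strategy matches the paper's exactly, and you have correctly located the one genuine gap: passing the a.s.\ limit $L(M_k,t_k,u_k)\to e^{-z}$ through the expectation. Your proposed route (bound $|e^{-S_{M_k}}|$ directly and hope for uniform integrability) does not work as stated, because on the ``bad event'' where some $X_{jt_k}$ exits the $\varepsilon$-ball, \eqref{Eq:equality_Mk} gives no control over the corresponding summand, and $|e^{-S_{M_k}}|$ need not be bounded there. The paper's remedy is to introduce the discrete stopping time $N_k=\inf\{n\in\NN:\norm{X_{nt_k}-X_0}>\varepsilon\}$ and apply optional stopping: $n\mapsto L(n\wedge N_k,t_k,u_k)$ is still a martingale with expectation $1$, but now \emph{every} term in the exponent satisfies \eqref{Eq:equality_Mk}, giving the deterministic bound $|L(M_k\wedge N_k,t_k,u_k)|\le C\exp(3/2)$. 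The c\`adl\`ag property then enters exactly as you describe, yielding $(M_k\wedge N_k)/M_k\to 1$ $\PP^x$-a.s., and dominated convergence delivers $\Ex{x}{L(M_k\wedge N_k,t_k,u_k)}\to e^{-z}$, contradicting $\Ex{x}{L(M_k\wedge N_k,t_k,u_k)}=1$.

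Your worry that $e^{-z}$ might accidentally equal $1$ is unfounded: Lemma~\ref{Lem:sequence} provides $|z|=1$, whereas $e^{-z}=1$ forces $z\in 2\pi i\,\mathbb{Z}$, which has no element of modulus $1$. Hence the contradiction is immediate once the limit passage is justified, and \eqref{Eq:inequality_phipsi} plays no further role at this point---it was already consumed inside the proof of Lemma~\ref{Lem:sequence} to construct the sequence $M_k$.
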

\begin{proof}
 We argue by contradiction: Fix $ l \in \NN $ and assume that \eqref{Eq:supsup_finite} fails to hold true. Then by Lemma~\ref{Lem:sequence} there exist $\varepsilon > 0$ and sequences $u_k \to u_0$ in $ K $, $t_k \downarrow 0$ and $M_k \uparrow \infty$ such that $t_k M_k \to 0$ and equations \eqref{Eq:inequality_phipsi}, \eqref{Eq:equality_Mk} hold.
 Define the $(\cF_{n
\Delta})_{n \in \NN}$-stopping times $N_k = \inf \set{n \in \NN: \norm{X_{n t_k} - X_0} > \varepsilon }$. Then by Lemma~\ref{Lem:bigL} and Doob's optional stopping lemma we know that
\begin{multline}\label{Eq:bigL2}
 n \mapsto L(n \wedge N_k,t_k,u_k) = \\ = \exp \left( \scal{u_k}{X_{(n \wedge N_k) t_k} - X_0} - \sum_{j=1}^{n \wedge N_k}\left(\phi(t_k, u_k) + \scal{\varrho(t_k, u_k)}{X_{(j-1) t_k}}\right)\right)
\end{multline}
is a $(\cF_{n
\Delta})_{n \in \NN}$-martingale too. It follows in particular that $\E{L(M_k \wedge
N_k,t_k,u_k)} = 1$ for all $k \in \NN$. By \eqref{Eq:equality_Mk},
we have the uniform bound
\begin{align}
|L(M_k \wedge N_k,t_k,u_k)| &\le C \exp\left(\left|\sum_{j=1}^{M_k
\wedge N_k}\left(\phi(t_k, u_k) + \scal{\varrho(t_k,
u_k)}{X_{(j-1)t_k}}\right)\right|\right) \le \notag \\
&\le C\exp(3/2) \label{Eq:bound_for_L},
\end{align}
where $C = \exp\left(-\Re \scal{u}{X_0}\right)$. Let $\delta > 0$
and $x \in D$. Since $X$ is c\`adl\`ag we can find a $T
> 0$ such that $\PP^x\left(\sup_{t \in [0,T]} \norm{X_t - X_0} >
\varepsilon \right) < \delta$. For $k$ large enough $t_k M_k \le T$
and hence $\PP(M_k > N_k) < \delta$. We conclude that
$\PP^x\left(\lim_{k \to \infty} \tfrac{M_K \wedge N_k}{M_k} =
1\right) \ge 1 - \delta$, and since $\delta$ was arbitrary $\lim_{k
\to \infty} \tfrac{M_k \wedge N_k}{M_k} = 1$ holds $\PP^x$-a.s. for
any $x \in D$. Together with \eqref{Eq:equality_Mk} and
\eqref{Eq:bound_for_L} we obtain by dominated convergence that
\begin{multline}
\lim_{k \to \infty}\Ex{x}{L(M_k \wedge N_k,T_k,u_k)} = \Ex{x}{\lim_{k \to \infty} L(M_k \wedge N_k,T_k,u_k)} = \\
= \Ex{x}{\lim_{k \to \infty} \exp\left((M_k \wedge N_k)
\left(\phi(t_k,u_k) + \scal{\varrho(t_k,u_k)}{x}\right)\right)} =
e^{-z}.
\end{multline}
where $|z| = 1$. But $\Ex{x}{L(M_k \wedge N_k, T_k,u_k)} = 1$ by its martingale property, which is the desired contradiction.
\end{proof}

\subsection{Affine processes are regular}
\begin{lem}\label{Lem:subsequence}
Let a sequence $t_k(u) \downarrow 0$ be assigned to each $u \in
\cU$. Then each of these sequences has a subsequence $\bbS(u) := \left(s_k(u)\right)_{k \in \NN}$
such that the limits
\begin{equation}\label{Eq:FR_limit_repeat}
F_\bbS(u) := \lim_{s_k(u) \downarrow 0}
\frac{\phi(s_k(u),u)}{s_k(u)}, \qquad R_\bbS(u) := \lim_{s_k(u)
\downarrow 0} \frac{\varrho(s_k(u),u)}{s_k(u)}
\end{equation}
are well-defined and finite. Moreover the subsequences $\bbS(u)$
can be chosen such that the numbers $F_\bbS(u)$ and $R_\bbS(u)$ are bounded on each compact subset $K$ of $\cU_l$ for each $ l \in \NN $.
\end{lem}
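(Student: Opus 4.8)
The plan is to read off the existence of the limiting subsequences directly from the uniform bound in Proposition~\ref{Prop:finite}, and then to observe that this same bound forces $F_\bbS$ and $R_\bbS$ to be bounded on compacta, no matter how the subsequences are extracted.

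First I would fix $u \in \cU$, choose $l \in \NN$ with $u \in \cU_l$, and apply Proposition~\ref{Prop:finite} to the compact set $K = \set{u}$. This gives $\limsup_{t \downarrow 0}\bigl(\abs{\phi(t,u)}/t + \norm{\varrho(t,u)}/t\bigr) < \infty$, so the sequence of pairs $\bigl(\phi(t_k(u),u)/t_k(u),\ \varrho(t_k(u),u)/t_k(u)\bigr)_{k \in \NN}$ is a bounded sequence in $\CC \times \CC^d$. By the Bolzano--Weierstrass theorem it has a convergent subsequence; I would let $\bbS(u) = (s_k(u))_{k\in\NN}$ be a subsequence of $(t_k(u))_{k\in\NN}$ along which both components converge. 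The limits $F_\bbS(u)$ and $R_\bbS(u)$ of \eqref{Eq:FR_limit_repeat} are then well-defined and finite by construction, which settles the first claim.

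For the ``moreover'' part I would show that any such choice of subsequences already works. Let $K$ be a compact subset of $\cU_l$ for some $l \in \NN$. By Proposition~\ref{Prop:finite} there exist $C_K < \infty$ and $\delta_K > 0$ such that $\abs{\phi(t,u)}/t + \norm{\varrho(t,u)}/t \le C_K$ for all $u \in K$ and all $t \in (0,\delta_K)$. Fixing $u \in K$, we have $s_k(u) \downarrow 0$, so $s_k(u) < \delta_K$ for all large $k$, and passing to the limit $k \to \infty$ in the inequality $\abs{\phi(s_k(u),u)}/s_k(u) + \norm{\varrho(s_k(u),u)}/s_k(u) \le C_K$ yields $\abs{F_\bbS(u)} + \norm{R_\bbS(u)} \le C_K$. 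Since this bound is independent of $u \in K$, it is the desired uniform bound.

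I do not expect a genuine obstacle here: essentially all the analytic substance is already contained in Proposition~\ref{Prop:finite}. The only point deserving (minor) care is that the estimate in that proposition is simultaneously uniform over $u \in K$ and valid for all sufficiently small $t$ --- exactly the two features one needs in order to pass it through the $u$-dependent subsequences $\bbS(u)$.
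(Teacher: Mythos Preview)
Your proposal is correct and follows essentially the same approach as the paper: both arguments rest entirely on Proposition~\ref{Prop:finite} together with the Bolzano--Weierstrass theorem. The only cosmetic difference is that the paper phrases both steps by contradiction (unboundedness of the difference quotients, respectively of $F_\bbS$ and $R_\bbS$, would violate the limsup bound), whereas you extract the subsequences and the uniform bound directly; the mathematical content is identical.
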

\begin{proof}
Let the sequences $t_k(u) \downarrow 0$ be given, but assume that the assertion of the Lemma
does not hold true. Then either $t_k(u)$ for some $ u \in \cU $ has no subsequence for which the limits in
\eqref{Eq:FR_limit_repeat} exist, or the limits $F(u)$ and $R(u)$ exist for
each $u \in \cU$, but at least one of them is not bounded in some compact $K \subset U_l$ for some $ l \in \NN $.\\
Consider the first case. By the Bolzano-Weierstrass theorem an
$\RR^d$-valued sequence that contains no convergent subsequence must
be unbounded, and we conclude that
\[ \limsup_{t_k(u) \downarrow 0}
\left(\frac{\abs{\phi(t_k(u),u)}}{t_k(u)} + \frac{\norm{\varrho
(t_k(u),u)}}{t_k(u)}\right) = \infty,\] in contradiction to
Proposition~\ref{Prop:finite}. Consider now the second assertion. Fix $ l \in \NN $. For each $u \in \cU_l$ there is a sequence $s_k(u)$ such that \eqref{Eq:FR_limit_repeat} holds, but $F_\bbS(u)$
or $R_\bbS(u)$ is not bounded in $K \subset \cU_l$, i.e. there exists a sequence
$u_n \to u_0$ in $ K $ for which $|F(u_n)| + \norm{R(u_n)} \to \infty$. Fix some $\eta > 0$. Then for each $k \in \NN$ there exists an $N_k \in \NN$ such that
\[\left|\frac{\phi(s_{N_k}(u_k),u_k)}{s_{N_k}(u_k)}\right| \ge \left|F(u_k)\right| - \eta/2 \qquad \text{and} \qquad \norm{\frac{\varrho(s_{N_k}(u_k),u_k)}{s_{N_k}(u_k)}} \ge \norm{R(u_k)} - \eta/2.\]
We conclude that
\begin{equation*}
\limsup_{s_k \downarrow 0} \sup_{u \in K}
\left(\frac{\abs{\phi(s_k,u)}}{s_k} + \frac{\norm{\varrho
(s_k,u)}}{s_k}\right) \ge \limsup_{k \to \infty} |F(u_k)| + \norm{R(u_k)} - \eta   = \infty,
\end{equation*}
again in contradiction to Prop.~\ref{Prop:finite}.
\end{proof}
Having shown Lemma~\ref{Lem:subsequence}, only a small step remains
to show regularity. Comparing with Definition \ref{Def:regular} we
see that two ingredients are missing: First we have to show that
the limits $F(u)$ and $R(u)$ do not depend on the choice of
subsequence, i.e. they are proper limits and hence the proper
derivatives of $\phi$ and $\psi$ at $t = 0$, and second we have to show that $F$
and $R$ are continuous on $ \cU_l$ for each $ l \in \NN$.

\begin{thm}\label{Thm:main}
Let $X$ be a c\`adl\`ag affine process on $D \subset \RR^d$. Then $X$ is
regular.
\end{thm}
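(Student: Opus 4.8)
The plan is to upgrade the subsequential limits of Lemma~\ref{Lem:subsequence} to genuine derivatives and then establish their continuity, thereby matching Definition~\ref{Def:regular}. The starting point is that by Lemma~\ref{Lem:subsequence}, for every assignment $u \mapsto (t_k(u))$ we can extract subsequences along which $\phi(s_k(u),u)/s_k(u) \to F_\bbS(u)$ and $\varrho(s_k(u),u)/s_k(u) \to R_\bbS(u)$ with values locally bounded on each $\cU_l$. The key device for promoting these to true limits is the semi-flow property \eqref{Eq:flow_prop}: writing $\phi(t,u)$ via a telescoping/integral identity along a fine grid and using that $\psi(s,u) \to u$ as $s \to 0$ (joint continuity), one sees that if two subsequences gave different limits $F_{\bbS_1}(u) \neq F_{\bbS_2}(u)$, one could interpolate between them along the semi-flow and contradict the finiteness bound \eqref{Eq:supsup_finite} of Proposition~\ref{Prop:finite} — or, more directly, one reconstructs $\phi(t,u) = \int_0^t F(\psi(r,u))\,dr$ and $\psi(t,u) = u + \int_0^t R(\psi(r,u))\,dr$ in the limit, forcing uniqueness of the right-derivative at $t=0$.

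Concretely, I would first fix $u$ and show the limits $F(u) := \lim_{t \downarrow 0}\phi(t,u)/t$ and $R(u) := \lim_{t\downarrow 0}\varrho(t,u)/t$ exist (independently of the subsequence). For this, use the martingale $L(n,\Delta,u)$ of Lemma~\ref{Lem:bigL}: taking $\Delta = t/n$ and letting $n \to \infty$, the c\`adl\`ag property of $X$ and the uniform bound from Proposition~\ref{Prop:finite} let one pass to the limit (dominated convergence, as in the proof of Proposition~\ref{Prop:finite}) to obtain
\begin{equation*}
\Ex{x}{\exp\left(\scal{u}{X_t - X_0} - \int_0^t \left(F(X_r) + \scal{R(u)}{X_r - \text{(const)}}\right)dr\right)} = 1,
\end{equation*}
schematically; combined with the affine property \eqref{Eq:affine_property} this pins down $F(u)+\scal{x}{R(u)}$ as the honest time-derivative of $\phi(t,u) + \scal{x}{\psi(t,u)}$ at $t=0+$, for $d+1$ affinely independent $x$, hence $F(u)$ and $R(u)$ individually. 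Alternatively — and this is the cleaner route — one avoids the martingale here and argues purely from the semi-flow: from \eqref{Eq:flow_prop}, $\frac{\phi(t,u)}{t} = \frac{1}{t}\sum_{j=0}^{n-1}\phi\big(t/n, \psi(jt/n,u)\big)$, and letting $n \to \infty$ with the local boundedness of difference quotients (Prop.~\ref{Prop:finite}) plus joint continuity of $\psi$ yields that any subsequential limit equals $\int_0^1 F_\bbS(\psi(rt,u))\,dr$-type expression; sending $t \downarrow 0$ and using $\psi(rt,u)\to u$ forces the limit to be subsequence-independent.

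For continuity of $F$ and $R$ on each $\cU_l$: having established them as proper limits with the representations $\phi(t,u) = \int_0^t F(\psi(r,u))\,dr$, $\psi(t,u) = u + \int_0^t R(\psi(r,u))\,dr$ (obtained by integrating the semi-flow-derived ODEs, i.e.\ \eqref{Eq:Riccati}), local boundedness from Proposition~\ref{Prop:finite} gives equicontinuity-type control: for $u, u'$ in a compact $K \subset \cU_l$, $\norm{\psi(r,u) - \psi(r,u')}$ is controlled (Gronwall-type, using local Lipschitz bounds that follow from local boundedness of the difference quotients uniformly on slightly larger compacts), and then $F(u) = \lim_{t \downarrow 0}\frac{1}{t}\int_0^t F(\psi(r,u))\,dr$ inherits continuity from the continuity of $u \mapsto \phi(t,u)$ for fixed small $t$ together with uniform (in $u \in K$) closeness of $\phi(t,u)/t$ to $F(u)$ — the latter uniformity being exactly what Proposition~\ref{Prop:finite} and the semi-flow interpolation provide. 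Once $F$ and $R$ are continuous on every $\cU_l$, Definition~\ref{Def:regular} is satisfied and $X$ is regular.

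The main obstacle I anticipate is making the passage from ``subsequential limits exist and are locally bounded'' to ``the limit is unique and the convergence is locally uniform'' genuinely rigorous: the semi-flow identity relates $\phi(t,u)/t$ to an average of $\phi(t/n,\cdot)/(t/n)$ over the orbit $\{\psi(jt/n,u)\}$, but that orbit moves (slightly) in $u$-space, so one must control $F$ and $R$ on a whole neighborhood simultaneously — which is circular unless organized carefully. The resolution is to treat the uniqueness and the local-uniform convergence together, using that Proposition~\ref{Prop:finite} already gives a \emph{uniform} $\limsup$ bound on compacts, so the family $\{\phi(t,\cdot)/t : t \in (0,t_0]\}$ is uniformly bounded on compacts; combined with the semi-flow averaging and joint continuity, a standard Arzel\`a--Ascoli / diagonal argument shows every subsequential limit agrees, giving the result.
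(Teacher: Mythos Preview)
Your proposal has a genuine gap at precisely the point you yourself flag as the obstacle. The ``cleaner route'' via the semi-flow identity
\[
\frac{\phi(t,u)}{t}=\frac{1}{n}\sum_{j=0}^{n-1}\frac{\phi(t/n,\psi(jt/n,u))}{t/n}
\]
together with Arzel\`a--Ascoli does not close: Proposition~\ref{Prop:finite} gives only \emph{uniform boundedness} of the family $\{\phi(t,\cdot)/t\}_{t\in(0,t_0]}$ on compacts, not equicontinuity, so Arzel\`a--Ascoli is unavailable and you cannot conclude that the subsequential limits of $\phi(t/n,v)/(t/n)$ glue to a single continuous function on the moving orbit $\{\psi(jt/n,u)\}$. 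The circularity you diagnose is real and your proposed resolution does not break it. Likewise, your continuity argument invokes ``local Lipschitz bounds that follow from local boundedness of the difference quotients'', but boundedness does not imply Lipschitz continuity, so the Gronwall step is unfounded.

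The paper's proof supplies exactly the missing mechanism, and it is probabilistic rather than analytic. From the discrete martingale $L(n,\Delta,u)$ of Lemma~\ref{Lem:bigL} (stopped at the first exit from an $r$-ball) one passes to the limit along \emph{each} subsequence $\bbS$ separately and obtains that
\[
L^u_{t\wedge\tau}=\exp\Bigl(\scal{u}{X_{t\wedge\tau}-X_0}-\int_0^{t\wedge\tau}\bigl(F_\bbS(u)+\scal{R_\bbS(u)}{X_{s-}}\bigr)\,ds\Bigr)
\]
is a true martingale. Writing $Y^u_{t\wedge\tau}=e^{\scal{u}{X_{t\wedge\tau}-X_0}}$ and applying \citet[Thm.~I.4.49]{Jacod1987} yields the \emph{special} semimartingale decomposition $Y^u_{t\wedge\tau}=M^u_{t\wedge\tau}+\int_0^{t\wedge\tau}Y^u_{s-}\bigl(F_\bbS(u)+\scal{R_\bbS(u)}{X_{s-}}\bigr)\,ds$, whose predictable finite-variation part is \emph{unique}. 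Running the same construction with a second subsequence $\wt\bbS$ forces the two compensators to agree pathwise on $\{t\le\tau\}$, hence $F_\bbS(u)+\scal{R_\bbS(u)}{X_{t-}}=\wt F_\bbS(u)+\scal{\wt R_\bbS(u)}{X_{t-}}$, and since $X_{t-}$ ranges over a ball this gives $F_\bbS=\wt F_\bbS$, $R_\bbS=\wt R_\bbS$. The uniqueness of the special semimartingale decomposition is the idea you are missing. Continuity on $\cU_l$ is then obtained not via Gronwall but by a second dominated-convergence argument: for $u_k\to u_0$ one uses $\Ex{x}{L^{u_k}_{t\wedge\tau}}=1$, lets $k\to\infty$ and then $t\downarrow0$ under the integral sign (the stopping at $\tau$ provides the required uniform bounds) to identify any cluster point $(F^\ast,R^\ast)$ of $(F(u_k),R(u_k))$ with $(F(u_0),R(u_0))$.
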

\begin{proof}
Our first step is to show that the derivatives $F(u)$ and $R(u)$ in \eqref{Eq:FR_def} exist. By
Lemma~\ref{Lem:subsequence} we already know that they exist as limits along a
sequence $\bbS(u)$ which depends on the point $u \in \cU$ and has been chosen as a particular subsequence of a given sequence $(t_k(u))_{k \in \NN}$. We show
now that the limit is in fact independent of the choice of $\bbS(u)$ and even of the original sequence $(t_k(u))_{k \in \NN}$, and hence that $F(u)$ and $R(u)$ are proper derivatives in the sense of \eqref{Eq:FR_def}. To this end, fix some $u \in \cU$, and let $\wt{\bbS}(u)$
be an arbitrary other sequence $\wt{s}_k(u) \downarrow 0$, such that
\begin{equation}\label{Eq:FR_limit}
\wt{F}_{\bbS}(u) := \lim_{\wt{s}_k(u) \downarrow 0}
\frac{\phi(\wt{s}_k(u),u)}{\wt{s}_k(u)}, \qquad \wt{R}_{\bbS}(u) :=
\lim_{\wt{s}_k(u) \downarrow 0} \frac{\varrho(\wt{s}_k(u),u)}{\wt{s}_k(u)}.
\end{equation}
We want to show that $F_{\bbS}(u) = \wt{F}_{\bbS}(u)$ and $R_{\bbS}(u) =
\wt{R}_{\bbS}(u)$. Assume for a contradiction that this were not the
case. Then we can find $x \in D$ and $r > 0$ such that the convex
set $\set{F_{\bbS}(u) + \scal{R_{\bbS}(u)}{\xi}: \norm{\xi -x} \le
r}$ and its counterpart involving $\wt{\bbS}$ are disjoint, i.e.
\begin{multline}\label{Eq:empty_intersection}
\set{\vphantom{\sum}F_{\bbS}(u) + \scal{R_{\bbS}(u)}{\xi}: \norm{\xi
-x} \le r} \cap \set{\vphantom{\sum}\wt{F}_{\bbS}(u) +
\scal{\wt{R}_{\bbS}(u)}{\xi}: \norm{\xi -x} \le r} = \emptyset.
\end{multline}
For the next part of the proof, we set $\tau = \inf \set{t \ge 0:
\norm{X_t - X_0} \ge r}$,  and introduce the following notation:
\begin{eqnarray*}
a^u_t &:=& F_{\bbS}(u) + \scal{R_{\bbS}(u)}{X_{t}}, \qquad \qquad A^u_t := \int_0^{t}{a^u_{s-}ds},\\
G^u_t &:=& \exp(A^u_t), \qquad \qquad \qquad Y^u_t :=
\exp(\scal{u}{X_{t} - X_0}
\end{eqnarray*}
with $\wt{a}^u_t$, $\wt{A}^u_t$ and $\wt{G}^u_t$ the corresponding counterparts for $\wt{F}_{\bbS}$ and $\wt{R}_{\bbS}$.
We show that
\begin{equation}\label{Eq:Ldef}
L^u_{t \wedge \tau} = \frac{Y^u_{t \wedge \tau}}{G^u_{t \wedge
\tau}} = \exp \left( \scal{u}{X_{t \wedge \tau} - X_0} - \int_0^{t
\wedge \tau}\left(F_\bbS(u) +
\scal{R_\bbS(u)}{X_{s-}}\right)ds\right)
\end{equation}
is a martingale under every $\PP^x, x\in D$. This reduces to showing that
\[
\Ex{x}{\exp \left(\scal{u}{X_{h \wedge \tau} - X_0} - \int_{0}^{h \wedge \tau }\left(F_\bbS(u) + \scal{R_\bbS(u)}{X_{s-}}\right)ds\right)}=1 \, ,
\]
since then by the Markov property of $X$
\begin{multline}
\Excond{x}{\exp \left( \scal{u}{X_{(t+h) \wedge \tau} - X_{t \wedge \tau}} - \int_{t \wedge \tau}^{(t+h) \wedge \tau}\left(F_\bbS(u) + \scal{R_\bbS(u)}{X_{s-}}\right)ds\right)}{\cF_t} = \\
\Excond{x}{\exp \left( \scal{u}{X_{(t+h) \wedge \tau} - X_{t \wedge \tau}} - \int_{t \wedge \tau}^{(t+h) \wedge \tau}\left(F_\bbS(u) + \scal{R_\bbS(u)}{X_{s-}}\right)ds\right)1_{\tau \geq t}}{\cF_t}+1_{\tau \leq t} = \\
= \Ex{X_t}{\exp \left(\scal{u}{X_{h \wedge \tau} - X_0} - \int_{0}^{h \wedge \tau }\left(F_\bbS(u) + \scal{R_\bbS(u)}{X_{s-}}\right)ds\right)}1_{\tau \geq t}+1_{\tau \leq t} = 1
\end{multline}
holds true. Now, use the sequence $\bbS(u) = (s_n(u))_{n \in \NN} \downarrow 0$
to define a sequence of Riemannian sums approximating the above
integral. Define $M_k = \lfloor h/s_k \rfloor$ and $N_k = \inf
\set{n \in \NN: \norm{X_{n s_k} - X_0} > r}$. First we show that
$s_k N_k \to \tau$ almost surely under every $\PP^x$. Fix $\omega
\in \Omega$ such that $t \to X_t(\omega)$ is a c\`adl\`ag function. Let
$\wt{N}_k(\omega)$ be a sequence in $\NN$ such that $s_k
\wt{N}_k(\omega) \downarrow \tau(\omega)$. It follows from the
right-continuity of $t \mapsto X_t(\omega)$ that for large enough
$k$ it holds that $\norm{X_{s_k \wt{N}_k} - X_0} > r$ and hence that
eventually $\wt{N}_k(\omega) \ge N_k(\omega)$. On the other hand
$\norm{X_{s_k N_k} - X_0} > r$ for all $k \in \NN$, which implies
that $N_k(\omega) s_k \ge \tau(\omega)$. Hence, for large enough $k
\in \NN$ it holds that
\[s_k \wt{N}_k(\omega) \ge s_k N_k(\omega) \ge \tau(\omega).\]
We also know that $s_k \wt{N}_k(\omega) \to \tau(\omega)$ as $k \to
\infty$, such that we conclude that $s_k N_k \to \tau$
$\PP^x$-almost surely, as claimed. By Riemann approximation and the
fact that $X$ is c\`adl\`ag it then holds that
\[\sum_{j=1}^{M_k \wedge N_k} \left(F_\bbS(u) + \scal{R_\bbS(u)}{X_{(j-1) s_k}}\right) s_k \to \int_{0}^{h \wedge \tau}\left(F_\bbS(u) + \scal{R_\bbS(u)}{X_{s-}}\right)ds\]
$\PP^x$-almost-surely as $k \to \infty$ for all $x \in D$.

From Lemma~\ref{Lem:subsequence} we know that $\phi(s_k,u) = F_\bbS(u) s_k  + o(s_k)$ and $\phi(s_k,u) = R_\bbS(u) s_k  + o(s_k)$. Moreover $(M_k \wedge N_k) o(s_k) \to 0$ since $M_k s_k \to 0$. Thus we have that
\begin{multline*}
L(M_k \wedge N_k,s_k,u) = \\
= \exp\left(\scal{u}{X_{(M_k \wedge N_k)s_k} - X_0} - \sum_{j=1}^{M_k \wedge N_k} \left(\phi(t_k,u) + \scal{\varrho(t_k,u)}{X_{(j-1)s_k}}\right)\right) = \\
= \exp\left(\scal{u}{X_{(M_k \wedge N_k)s_k} - X_0} - \vphantom{\sum_{j=1}^{M_k \wedge N_k}}\right. \\
- \left.\sum_{j=1}^{M_k \wedge N_k} \left(F_\bbS(u) + \scal{R_\bbS(u)}{X_{(j-1)s_k}}\right) s_k + (M_k \wedge N_k) o(s_k)\right) \to \\
\to \exp \left(\scal{u}{X_{h \wedge \tau} - X_0} - \int_{0}^{h
\wedge \tau}\left(F_\bbS(u) +
\scal{R_\bbS(u)}{X_{s-}}\right)ds\right),
\end{multline*}
as $k \to \infty$ almost surely with respect to all $\PP^x, x\in D$.
But by Lemma~\ref{Lem:bigL} and optional stopping, $\Ex{x}{L(M_k
\wedge N_k,s_k,u)} = 1$, such that by dominated convergence we
conclude that
\[ \E{\exp \left(\scal{u}{X_{h \wedge \tau} - X_0} - \int_{0}^{h \wedge \tau}\left(F_\bbS(u) + \scal{R_\bbS(u)}{X_{s-}}\right)ds\right)} = 1,\]
and hence that $t \mapsto L^u_{t \wedge \tau}$ is a martingale.
Summing up we have established that $Y^u_{t \wedge \tau} = L^u_{t
\wedge \tau} G^u_{t \wedge \tau}$, where $L^u_{t \wedge \tau}$ is a
martingale and hence a semimartingale. Clearly, the process $G^u_{t
\wedge \tau}$ is predictable and of finite variation and hence a
semimartingale too. We conclude that also the product $Y^u_{t \wedge
\tau} = \exp\left(\scal{u}{X_{t \wedge \tau}^x - x}\right)$ is a
semimartingale. It follows from \citet[Thm.~I.4.49]{Jacod1987} that
$M^u_{t \wedge \tau} = Y^u_{t \wedge \tau} - \int_0^{t \wedge
\tau}{L^u_{s-}}dG^u_s$ is a local martingale. We can rewrite $M^u_t$
as
\begin{equation*}
M^u_t = Y^u_{t \wedge \tau} - \int_0^t{L^u_{s-}G^u_{s-}dA^u_s} =
Y^u_t - \int_0^t{Y^u_{s-}dA^u_s} = Y^u_t -
\int_0^t{Y^u_{s-}a^u_{s-}ds}.\end{equation*} Hence $Y^u_{t \wedge
\tau} = M^u_{t \wedge \tau} + \int_0^{t \wedge
\tau}{Y^u_{s-}a^u_{s-}ds}$ is the decomposition of the
semi-martingale $Y^u_{t \wedge \tau}$ into a local martingale and a
finite variation part. But $\int_0^{t \wedge
\tau}{Y^u_{s-}a^u_{s-}ds}$ is even predictable, such that $Y^u$ is a
special semi-martingale, and the decomposition is unique. The same
derivation goes through with $A^u$ replaced by $\wt{A}^u$ and by the
uniqueness of the special semi-martingale decomposition we conclude
that
\[\int_0^{t \wedge \tau}{Y^u_{s-}a^u_{s-}ds} = \int_0^{t \wedge \tau}{Y^u_{s-}\wt{a}^u_{s-}ds},\]
up to a $\PP^x$-nullset. Taking derivatives we see that
$Y^u_{t-}a^u_{t-} = Y^u_{t-}\wt{a}^u_{t-}$ on $\set{t \le \tau}$. As
long as $t \le \tau$ it holds that $Y^u_{t-} \neq 0$, and dividing
by $Y^u_{t-}$, we see that $a^u_{t-} = \wt{a}^u_{t-}$, that is
\[F_{\bbS}(u) + \scal{R_{\bbS}(u)}{X_{(t \wedge \tau)-}} = \wt{F}_{\bbS}(u) + \scal{\wt{R}_{\bbS}(u)}{X_{(t \wedge \tau)-}} \quad \text{for all} \quad t \le \tau,\]
$\PP^x$-a.s, in contradiction to \eqref{Eq:empty_intersection}. We
conclude that the limits $F_{\bbS}$ and $R_{\bbS}$ are independent
from the sequence $\bbS$, and hence that $F(u)$ and $R(u)$ exist as
proper derivatives in the sense of \eqref{Eq:FR_def}.

 It remains to show that $F(u)$ and $R(u)$ are continuous on $ \cU_l $ for each $ l \in \NN$. Fix $ l \in \NN $ and suppose for a contradiction that there exists a sequence $ u_k \to u_0 $ in $ \cU_l $ such that $ F(u_k) \to F^{\ast} $ and $ R(u_k) \to R^{\ast} $, such that either $F(u_0) \neq F^\ast$ or $R(u_0) \neq R^\ast$. Since $D$ affinely spans $\RR^d$ this means that there is $ x \in D $ with
\[F(u_0) + \scal{R(u_0)}{x} \neq F^{\ast} + \scal{R^{\ast}}{x}.\]
Using the fact that $\Ex{x}{L^{u_k}_{t \wedge \tau}} = 1$ for all $k
\in \NN$ we obtain
\begin{multline}
\frac{1}{t} \left(\exp(\scal{\phi(t,u_0) + \psi(t,u_0)}{x}) - 1\right) = \lim_{k \to
\infty} \frac{1}{t} \Ex{x}{e^{\scal{u_0}{X_t - X_0}} - L_{t \wedge
\tau}^{u_k}} = \\ = \lim_{k \to \infty} \frac{1}{t}\Ex{x}{e^{\scal{u_0}{X_t - X_0}} \left(1
- \exp(-\int_0^{t \wedge \tau} (F(u_k) + \scal{R(u_k)}{X_{s-}})
ds\right)} = \\ = \Ex{x}{\frac{1}{t} e^{\scal{u_0}{X_t - X_0}} \left(1
- \exp(-\int_0^{t \wedge \tau} (F^{\ast} + \scal{R^{\ast}}{X_{s-}})
ds\right)}.\label{Eq:Fstar}
\end{multline}
for all $t \le \sigma(0)$ by dominated convergence. Writing $C =
\left|F^*\right| + \norm{R^*}\varepsilon$ and using the elementary
inequality $|1 - e^z| \le |z|e^{|z|}$ we can bound
\[\left|\frac{1}{t}e^{\scal{u_0}{X_t - X_0}} \left(1 - \exp(-\int_0^{t \wedge \tau} (F^{\ast} + \scal{R^{\ast}}{X_{s-}}) ds\right)\right| \le Ce^{2l + Ct}\]
and therefore apply again dominated convergence to the right hand side of \eqref{Eq:Fstar} as  $t \to 0$. Taking the limit on both sides, we obtain
\[ F(u_0) + \scal{R(u_0)}{x} = F^{\ast} + \scal{R^{\ast}}{x}\]
leading to the desired contradiction.
\end{proof}

We conclude with a corollary that gives conditions for an affine
process to be a $D$-valued semimartingale, up to its explosion time.
Let $\tau_n = \inf \set{t \ge 0: \norm{X_t - X_0}
> n}$ and define the explosion time $\tau_\text{exp}$ as the
pointwise limit $\tau_\text{exp} = \lim_{n \to \infty} \tau_n$. Note
that $\tau_\text{exp}$ is predictable.

\begin{cor}
Let $X$ be a c\`adl\`ag affine process and suppose that the killing terms vanish, i.e.~$c = 0$ and $\gamma =
0$. Then under every $\PP^x, x \in D$ the process $X$ is a $D$-valued
semi-martingale on $[0,\tau_\text{exp})$ 
with absolutely continuous semimartingale characteristics
\begin{align*}
A_t &= \int_0^t A(X_{s-}) ds\\
B_t &= \int_0^t B(X_{s-}) ds\\
K([0,t],d\xi) &= \int_0^t \nu(X_{s-},d\xi) ds.
\end{align*}
where $A(.), B(.)$ and $\nu(.,d\xi)$ are given by \eqref{Eq:AB_nu}.
\end{cor}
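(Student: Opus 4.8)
The plan is to use the exponential martingales already built in the proof of Theorem~\ref{Thm:main} to show first that the complex exponentials $x\mapsto e^{i\scal{p}{X}}$, $p\in\RR^d$, are semimartingales on $[0,\tau_\text{exp})$, then to upgrade this to $X$ itself by a local branch-of-logarithm argument, and finally to read off the characteristics from It\^o's formula. First I would collect the consequences of regularity: by Theorem~\ref{Thm:main} the process is regular, hence by Proposition~\ref{Prop:LevyK} the derivatives $F,R$ exist and have the L\'evy--Khintchine form \eqref{Eq:FR_LK_form}; the assumption that the killing terms vanish means $c=0$ and $\gamma^1=\dots=\gamma^d=0$, so $C(x)\equiv0$ and, for each $x\in D$, the map
\[
p\longmapsto F(ip)+\scal{R(ip)}{x}=-\tfrac12\scal{p}{A(x)p}+i\scal{B(x)}{p}+\int_{\RR^d\setminus\set{0}}\big(e^{i\scal{\xi}{p}}-1-i\scal{h(\xi)}{p}\big)\,\nu(x,d\xi)
\]
is the characteristic exponent of a (non-killed) infinitely divisible law. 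Moreover, since the killing rate is zero, $X$ takes values in $D\subset\RR^d$ on the stochastic interval $[0,\tau_\text{exp})$, so it only remains to show that $X$ is a semimartingale there with the stated characteristics.

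Next I would revisit the proof of Theorem~\ref{Thm:main}: for every $u\in\cU$ and every $r>0$ it is shown there that, with $\tau=\inf\set{t\ge0:\norm{X_t-X_0}\ge r}$, the process $L^u_{t\wedge\tau}$ is a martingale under every $\PP^x$, and consequently
\[
Y^u_{t\wedge\tau}=\exp\!\big(\scal{u}{X_{t\wedge\tau}-X_0}\big)=M^u_{t\wedge\tau}+\int_0^{t\wedge\tau}Y^u_{s-}\big(F(u)+\scal{R(u)}{X_{s-}}\big)\,ds
\]
with $M^u$ a local martingale; in particular $Y^u_{\cdot\wedge\tau}$ is a semimartingale, and it is special whenever $\Re u$ is bounded on $D$, e.g.\ for $u=ip$ with $p\in\RR^d$ (then $|Y^{ip}|\le1$). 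Applying this with $r=n$, so that $\tau=\tau_n$, and letting $n\to\infty$ along $\tau_n\uparrow\tau_\text{exp}$, one obtains that $t\mapsto e^{i\scal{p}{X_t}}$ is a semimartingale on $[0,\tau_\text{exp})$ for every $p\in\RR^d$.

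To pass from the exponentials to $X$ I would argue coordinatewise. Fix $j$ and put $Z=e^{iX^j}$, a semimartingale on $[0,\tau_\text{exp})$ by the previous step; set $T_0=0$ and $T_{m+1}=\inf\set{t\ge T_m:\abs{X^j_t-X^j_{T_m}}>1}\wedge\tau_\text{exp}$. A c\`adl\`ag path that is bounded on compact subintervals of $[0,\tau_\text{exp})$ admits only finitely many oscillations of size $>1$ on each such interval, so $T_m\uparrow\tau_\text{exp}$ a.s. For $t\in[T_m,T_{m+1})$ the path $X^j$ stays in an arc of length $\le2<2\pi$ around $X^j_{T_m}$, on a neighbourhood of which a smooth branch of the argument is defined; extending it to a function $\ell_m\in C^2_c(\CC)$ gives $X^j_t=X^j_{T_m}+\ell_m(Z_t)$ on $[T_m,T_{m+1})$, so by It\^o's formula $X^j$ is a semimartingale on each $[T_m,T_{m+1}]$ (a jump out of the arc at $T_{m+1}$ only adds an $\cF_{T_{m+1}}$-measurable single-jump finite-variation term), and a standard pasting argument over $T_m\uparrow\tau_\text{exp}$ shows that $X^j$, hence $X$, is a semimartingale on $[0,\tau_\text{exp})$. \emph{This step, together with the null-set bookkeeping of the next one, is the main obstacle}; everything else is either a direct appeal to Theorem~\ref{Thm:main} or routine.

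Finally I would identify the characteristics. Let $(B,C,\nu)$ be the characteristics of $X$ on $[0,\tau_\text{exp})$ relative to the truncation $h$. By It\^o's formula (cf.~\citet{Jacod1987}) applied to $x\mapsto e^{i\scal{p}{x}}$, the predictable finite-variation part of the special semimartingale $e^{i\scal{p}{X}}$ equals
\[
\int_0^{\cdot}e^{i\scal{p}{X_{s-}}}\Big(i\scal{p}{dB_s}-\tfrac12\scal{p}{dC_s\,p}+\int_{\RR^d\setminus\set{0}}\big(e^{i\scal{\xi}{p}}-1-i\scal{h(\xi)}{p}\big)\,\nu(ds,d\xi)\Big),
\]
while by the second step it also equals $\int_0^{\cdot}e^{i\scal{p}{X_{s-}}}\big(F(ip)+\scal{R(ip)}{X_{s-}}\big)\,ds$. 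Uniqueness of the canonical decomposition and $e^{i\scal{p}{X_{s-}}}\ne0$ force the two differential ``symbols'' to agree $ds\otimes d\PP^x$-a.e. for each fixed $p$; taking a countable dense set of $p$'s and using continuity in $p$ of both sides (dominated convergence, using $\int(\norm{\xi}^2\wedge1)\,\nu(x,d\xi)<\infty$), the identity holds simultaneously for all $p\in\RR^d$ off a single $\PP^x$-null set. Hence $dB_s$, $dC_s$ and $\int_{\RR^d\setminus\set{0}}(\norm{\xi}^2\wedge1)\,\nu(ds,d\xi)$ are absolutely continuous in $s$, and by the uniqueness of the L\'evy--Khintchine triplet of the exponent $p\mapsto F(ip)+\scal{R(ip)}{X_{s-}}$ computed in the first paragraph, the differential characteristics of $X$ are exactly $(B(X_{s-}),A(X_{s-}),\nu(X_{s-},d\xi))$, which is the assertion of the corollary.
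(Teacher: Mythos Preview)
Your argument is correct, but it takes a substantially more laborious route than the paper's. The paper's proof is essentially two lines once the martingales $L^u_{t\wedge\tau_n}$ from the proof of Theorem~\ref{Thm:main} are in hand: first, it proves that $X$ does not reach $\delta$ before $\tau_\text{exp}$ by setting $u=0$ in the martingale identity $\Ex{x}{L^0_{t\wedge\tau_n}}=1$ (using $F(0)=-c=0$, $R(0)=-\gamma=0$, so that $L^0_{t\wedge\tau_n}=\Ind{X_{t\wedge\tau_n}\in D}$); you assert this fact but do not justify it, and it is not entirely automatic. Second, having the local martingales $L^{ip}$ on $[0,\tau_\text{exp})$ for all $p\in\RR^d$, the paper invokes \citet[Cor.~II.2.48b]{Jacod1987} as a black box. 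That corollary says precisely that if, for a candidate triplet $(B,C,\nu)$, the associated exponential processes are local martingales for every $p$, then $X$ is a semimartingale with characteristics $(B,C,\nu)$; it therefore delivers both the semimartingale property of $X$ and the identification of the characteristics in a single stroke.

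Your approach instead re-derives this corollary by hand in the present setting: you first show the complex exponentials are semimartingales, then recover $X$ via a branch-of-logarithm argument localized along the stopping times $T_m$, and finally identify the characteristics by comparing the It\^o expansion with the known compensator. This is self-contained and does not require knowing the Jacod--Shiryaev result, which has some expository value; the price is the delicate pasting at the $T_m$ (handling the jump at $T_{m+1}$ and the c\`adl\`ag argument that $T_m\uparrow\tau_\text{exp}$) and the null-set bookkeeping when passing from countably many $p$'s to all $p$'s. Both proofs ultimately rest on the same input---the exponential local martingales built in Theorem~\ref{Thm:main}---but the paper's is much shorter because it outsources the passage from exponentials to characteristics to a standard reference.
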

\begin{proof}
In the proof of Theorem~\ref{Thm:main} we have shown that $t \mapsto
L^u_{t \wedge \tau}$, with $L_t^u$ defined in \eqref{Eq:Ldef} and
$\tau = \inf \set{t \ge 0: \norm{X_t - X_0} > r}$, is a martingale
under every $\PP^x, x \in D$ and for every $u \in \cU$. Since $r >
0$ was arbitrary, also $L^u_{t \wedge \tau_n}$ is a martingale for
every $n \in \NN$. By dominated convergence and using that $F(0) +
\scal{R(0)}{x} = c + \scal{\gamma}{x} = 0$ for all $x \in D$ we
obtain
\[\PP^x \left(X_{t \wedge \tau_\text{exp}} \neq \delta\right) = \lim_{n \to \infty} \PP \left(X_{t \wedge \tau_n} \neq \delta\right) = \E{L_{t \wedge \tau_n}^0} = 1.\]
Hence $X_t$ and $X_{t-}$ stay $\PP^x$-almost surely in $D \subset
\RR^d$ for $t \in [0,\tau_\text{exp})$. Moreover $t \mapsto L^u_t$
is a local martingale on $[0,\tau_\text{exp})$ for all $u \in \cU$.
Thus \citet[Cor.~II.2.48b]{Jacod1987} can be applied to the local
martingale $L_t^u$ with $u \in i\RR^d$ and the assertion follows.
\end{proof}

\bibliographystyle{plainnat}
\bibliography{references}

\end{document}